\documentclass[11pt,a4paper]{article}

\usepackage{amsmath,amsfonts,amssymb}
\usepackage{bbm}
\usepackage{cases}

\usepackage{lipsum}
\usepackage{subcaption}

\usepackage{xcolor}
\usepackage{graphicx}
\usepackage{ulem}

\newtheorem{defn}{Definition}[section]
\newtheorem{theo}[defn]{Theorem}
\newtheorem{lem}[defn]{Lemma}
\newtheorem{prop}[defn]{Proposition}
\newtheorem{cor}[defn]{Corollary}
\newtheorem{rem}[defn]{Remark}
\newtheorem{exam}[defn]{Example}

\newenvironment{proof}{{\bf Proof }}{{\vskip 0.1cm \hfill$\Box$}}

\begin{document} 

\noindent
{\Large \bf Well-posedness of linear elliptic equations with $L^d$-drifts under divergence-type conditions}
\\ \\
\bigskip
\noindent
{\bf Haesung Lee}  \\
\noindent
{\bf Abstract.}  
We establish the well-posedness of linear elliptic equations with critical-order drifts in $L^d$ and positive zero-order coefficients in $L^1$ or $L^{\frac{2d}{d+2}}$, where classical methods are often too restrictive. Our approach relies on a divergence-free transformation and a structural condition on the drift vector field, which admits a decomposition into a regular component and another whose weak divergence belongs to $L^{\tilde{q}}$ for some $\tilde{q} > \frac{d}{2}$. This condition is essential for constructing a suitable weight function $\rho$ via the weak maximum principle and the Harnack inequality. Within this framework, we prove the existence and uniqueness of weak solutions, significantly relaxing the regularity assumptions on the zero-order coefficients in $L^{\frac{d}{2}}$.
\\ \\
\noindent
{Mathematics Subject Classification (2020): {Primary: 35J25, 35R05, Secondary: 35D30, 35B50}}\\

\noindent 
{Keywords: linear elliptic equations, well-posedness, weak solutions, divergence-free transformation, Harnack inequality, weak maximum principle}

\section{Introduction} \label{intro}
 
This paper establishes the well-posedness (existence and uniqueness of weak solutions) (cf. Definition \ref{weaksoldefinition}) of the following Dirichlet problem for a linear elliptic equation in divergence form, defined on a bounded open subset $U \subset \mathbb{R}^d$ with $d \geq 3$:
\begin{equation} \label{maineq2}
\left\{
\begin{alignedat}{2}
- \operatorname{div}(A \nabla u) + \langle \mathbf{H}, \nabla u \rangle + c u &= f && \quad \text{in } U, \\
u &= 0 && \quad \text{on } \partial U,
\end{alignedat}
\right.
\end{equation}
where $A$ is uniformly strictly elliptic and bounded on $U$ (see \eqref{elliptici}).
The well-posedness of \eqref{maineq2} was established in \cite{L25jm} not by the conventional bilinear form methods, but by employing weak convergence techniques combined with a divergence-free transformation, under the assumptions that $\mathbf{H} \in L^p(U, \mathbb{R}^d)$ with $p \in (d, \infty)$, $c \in L^1(U)$, and $f \in L^{q}(U)$ with $q \in (\frac{d}{2}, \infty)$. It is therefore a natural problem to investigate whether the condition $\mathbf{H} \in L^p(U, \mathbb{R}^d)$ can be relaxed to the critical case $\mathbf{H} \in L^d(U, \mathbb{R}^d)$. However, under the sole assumption $\mathbf{H} \in L^d(U, \mathbb{R}^d)$, an extension of the result to the cases $c \in L^1(U)$ or $c \in L^{\frac{2d}{d+2}}(U)$ cannot be achieved directly by either the standard bilinear form methods or the approach in \cite{L25jm}.\\
The well-posedness of \eqref{maineq2} via bilinear form methods based on the Lax-Milgram theorem originates from G. Stampacchia's work \cite{S65}, where it was proved that there exists a constant $\gamma > 0$, depending only on $A$, $\mathbf{H}$, and $d$, such that the problem \eqref{maineq2} admits a unique weak solution whenever $c \geq \gamma$ and $c \in L^\frac{d}{2}(U)$ with $d \geq 3$. Similar results, under certain restrictions on the zero-order coefficients, are treated in \cite{LU68}.
In \cite{T73} (cf. \cite[Section 8.2]{GT}), N. S. Trudinger established the well-posedness of \eqref{maineq2} by developing a weak maximum principle under the assumptions $\mathbf{H} \in L^d(U, \mathbb{R}^d)$, $c \in L^{\frac{d}{2}}(U)$ with $c \geq 0$, and $f \in L^{\frac{2d}{d+2}}(U)$.
In the absence of the classical coercivity property, the well-posedness of \eqref{maineq2} has also been obtained via a duality argument in \cite{Dr}. For another reference on non-coercive linear equations with coefficients in Lorentz spaces, \cite{M11} establishes the well-posedness of the dual problem associated with \eqref{maineq2}. For further results beyond the $L^2(U)$-regularity of $\nabla u$, or for corresponding results concerning non-divergence type counterparts of \eqref{maineq2}, we refer to \cite{L25jm} and references therein. \\
To understand the technical challenge in the critical case of the drift coefficients, we first revisit the approach of \cite{L25jm} under the assumptions $\mathbf{H} \in L^p(U, \mathbb{R}^d)$ for some $p \in (d, \infty)$ and $c \in L^1(U)$ with $c \geq 0$.
In \cite{L25jm}, to apply a divergence-free transformation, one first constructs a strictly positive function $\rho \in H^{1,2}(U) \cap C(\overline{U})$ and a divergence-free vector field $\rho \mathbf{B} \in L^2(U, \mathbb{R}^d)$, which then transforms the original equation \eqref{maineq2} into the form shown in \eqref{maineq3} (see Theorem \ref{divfretransform}):
\begin{equation} \label{maineq3}
\left\{
\begin{alignedat}{2}
	- \operatorname{div}(\rho A \nabla u) + \langle \rho \mathbf{B}, \nabla u \rangle + \rho c u &= \rho f && \quad \text{in } U, \\
u &= 0 && \quad \text{on } \partial U.
\end{alignedat}
\right.
\end{equation}
This reformulation enables the derivation of a priori $H^{1,2}$ and $L^{\infty}$-bounds, and through a delicate application of weak compactness methods and a duality argument, one can obtain the existence and uniqueness of a weak solution $u \in H^{1,2}_0(U)$ to \eqref{maineq2}.\\
In the critical case where $\mathbf{H} \in L^d(U, \mathbb{R}^d)$, the lack of regularity makes it difficult to construct the function $\rho \in H^{1,2}(U) \cap C(\overline{U})$ and the vector field $\rho \mathbf{B}$ as in the approach described above. 
The first reason is that under the condition $\mathbf{H} \in L^d(U, \mathbb{R}^d)$, obtaining the key estimate in Lemma \ref{lempactprelim} may not be directly derived but requires delicate computations and the use of a partition of unity.
The second reason is that the method employed in \cite{L25jm} relies on the construction of a function $\rho$ satisfying an elliptic Harnack inequality and H\"{o}lder continuity. However, such a construction is significantly restricted under the assumption $\mathbf{H} \in L^d(U, \mathbb{R}^d)$ (see Remark~\ref{remcounter}), which justifies the necessity of imposing additional conditions on $\mathbf{H}$ in $L^d(U, \mathbb{R}^d)$. Indeed, we show that the divergence-free transformation can still be successfully carried out if $\mathbf{H}$ admits a decomposition where one component has a sufficiently regular divergence in $L^{\tilde{q}}$ for some $\tilde{q} \in \left( \frac{d}{2}, \infty \right)$.\\  \\
Before stating our main result, let us present the main assumption in this paper: \\ \\
{\bf (T)}
{\it
$U$ is a bounded open subset of $\mathbb{R}^d$ with $d \geq 3$, and $B_r(x_0)$ is an open ball in $\mathbb{R}^d$ such that $\overline{U} \subset B_r(x_0)$.
$\mathbf{H}_1 \in L^p(B_r(x_0), \mathbb{R}^d)$ for some $p \in (d, \infty)$, and $\mathbf{H}_2 \in L^d(B_r(x_0), \mathbb{R}^d)$ satisfies the following distributional identity (see Definition \ref{defnweaksol}): there exists $\tilde{h} \in L^{\tilde{q}}(B_r(x_0))$ with $\tilde{q} \in (\frac{d}{2}, \infty)$ such that
$$
\int_{B_r(x_0)} \langle \mathbf{H}_2, \nabla \psi \rangle \, dx = -\int_{B_r(x_0)} \tilde{h} \psi \, dx \quad \text{for all } \psi \in C_0^{\infty}(B_r(x_0)),
$$
i.e., $\operatorname{div} \mathbf{H}_2=\tilde{h} \in L^{\tilde{q}}(B_r(x_0))$. $\mathbf{H} := \mathbf{H}_1 + \mathbf{H}_2 \in L^d(B_r(x_0), \mathbb{R}^d)$. $A = (a_{ij})_{1 \leq i,j \leq d}$ is a (possibly non-symmetric) matrix of measurable functions on $\mathbb{R}^d$ such that there exist constants $M > 0$ and $\lambda > 0$~satisfying
\begin{equation} \label{elliptici}
\max_{1 \leq i,j \leq d} |a_{ij}(x)| \leq M, \quad \langle A(x) \xi, \xi \rangle \geq \lambda \| \xi \|^2 \quad \text{for a.e. } x \in \mathbb{R}^d \text{ and all } \xi \in \mathbb{R}^d.
\end{equation}
}
\\
The following is the main theorem of this paper, which shows that the conclusion of \cite[Theorem 1.1]{L25jm} remains robust under the assumption of an $L^d$-drift.
\begin{theo} \label{mainexisthm}
Assume that {\bf (T)} holds. Let $c \in L^1(U)$ with $c \geq 0$ in $U$. Then, the following statements~hold:
\begin{itemize}
\item[(i)]
Let $v \in H_0^{1,2}(U)$ with $cv \in L^1(U)$ be such that
\begin{equation} \label{weakdzero}
\int_U \langle A \nabla v, \nabla \psi \rangle + \langle \mathbf{H}, \nabla v \rangle \psi + cv \psi \, dx = 0 \quad \text{for all } \psi \in C_0^{\infty}(U).
\end{equation}
Then, $v = 0$.

\item[(ii)]
Let $f \in L^q(U)$ for some $q > \frac{d}{2}$. Then, there exists a unique weak solution $u \in H_0^{1,2}(U) \cap L^{\infty}(U)$ to \eqref{maineq2}. Moreover, $u$ satisfies
\begin{equation} \label{estimenbd2}
\|u\|_{H_0^{1,2}(U)} \leq K_5 \|f\|_{L^{\frac{2d}{d+2}}(U)},
\end{equation}
and
\begin{equation} \label{bddestmizte}
\|u\|_{L^{\infty}(U)} \leq K_6 \|f\|_{L^q(U)},
\end{equation}
where $K_5 = \tilde{K}_1 K_3$, $K_6 = \tilde{K}_1 K_4$, $\tilde{K}_1 > 0$ is a constant as in Theorem \ref{existrho}, depending only on $d$, $\lambda$, $M$, $B_r(x_0)$, $p$, $\tilde{q}$, $\mathbf{H}$, the constant $K_3>0$ depends only on $d$, $\frac{\lambda}{\tilde{K}_1}$, $|U|$, and the constant $K_4 > 0$ depends only on $d$, $\frac{\lambda}{\tilde{K}_1}$, $|U|$, $q$.
In particular, if $\alpha > 0$ is a constant, $c \geq \alpha$ in $U$, and $f \in L^{\theta}(U) \cap L^q(U)$ with $\theta \in [1, \infty]$, then $u$ satisfies the following contraction estimate:
\begin{equation} \label{normalcontrac}
\|u\|_{L^{\theta}(U)} \leq \frac{\tilde{K}_1}{\alpha} \|f\|_{L^{\theta}(U)}.
\end{equation}

\item[(iii)]
Let $c \in L^{\frac{2d}{d+2}}(U)$ and $f \in L^{\frac{2d}{d+2}}(U)$. Then there exists a unique weak solution $u \in H_0^{1,2}(U)$ to \eqref{maineq2}. Moreover, $u$ satisfies \eqref{estimenbd2}.
In particular, if $\alpha > 0$ is a constant, $c \geq \alpha$ in $U$, and $f \in L^{\theta}(U) \cap L^{\frac{2d}{d+2}}(U)$ with $\theta \in [1, \infty]$, then $u$ satisfies \eqref{normalcontrac}.
\end{itemize}
\end{theo}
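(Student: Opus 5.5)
The strategy is to reduce everything to the divergence-free framework of \cite{L25jm}. By Theorem \ref{existrho}, assumption {\bf (T)} gives a weight $\rho \in H^{1,2}(U)\cap C(\overline U)$ with $\tilde K_1^{-1}\le \rho \le \tilde K_1$ on $U$. It also gives a vector field $\mathbf B$ with $\rho\mathbf B \in L^2(U,\mathbb R^d)$ weakly divergence-free, such that the weak formulation \eqref{maineq2} is equivalent to \eqref{maineq3} (Theorem \ref{divfretransform}). The construction of $\rho$ is where the condition $\operatorname{div}\mathbf H_2\in L^{\tilde q}$ with $\tilde q>\frac d2$ enters, through the weak maximum principle and the Harnack inequality. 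Here it is taken as given.

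In \eqref{maineq3} the drift term is antisymmetric: for $u$ bounded, $\int_U\langle\rho\mathbf B,\nabla u\rangle u\,dx=\frac12\int_U\langle\rho\mathbf B,\nabla u^2\rangle dx=0$. This gives coercivity $\int\langle\rho A\nabla u,\nabla u\rangle\ge \frac{\lambda}{\tilde K_1}\|\nabla u\|^2_{L^2}$, which explains why the constants $K_3,K_4$ depend only on $d$, $\frac{\lambda}{\tilde K_1}$, $|U|$ and $q$.

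\textbf{Existence in (ii) and (iii).} First truncate: set $c_n=c\wedge n$, and in (iii) also $f_n=(f\vee(-n))\wedge n$, and mollify $\mathbf B$ if needed. The approximate problems are then solvable by Lax--Milgram; alternatively, apply Trudinger's result \cite{T73} directly to \eqref{maineq2} with $c_n$ bounded. Test the transformed equation with $u_n$. Because the drift term vanishes and $\rho c_n u_n^2\ge0$, Sobolev's inequality yields $\|u_n\|_{H_0^{1,2}}\le K_3\tilde K_1\|f\|_{L^{2d/(d+2)}}$. For the $L^\infty$ bound, run Stampacchia truncation, testing with $(u_n-k)^+$. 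The drift again drops out, since $\int\langle\rho\mathbf B,\nabla (u_n-k)^+\rangle (u_n-k)^+=0$, so $\|u_n\|_\infty\le \tilde K_1K_4\|f\|_{L^q}$. For the contraction estimate \eqref{normalcontrac}, test with $|u_n|^{\theta-2}u_n$, suitably truncated. The drift term is a divergence of a function of $u_n$ and vanishes, the diffusion term is nonnegative, and $\rho c_n\ge \alpha/\tilde K_1$. H\"older's inequality then gives $\alpha\tilde K_1^{-1}\|u_n\|_\theta^\theta\le\tilde K_1\|f\|_\theta\|u_n\|_\theta^{\theta-1}$. This matches the constant only if the weight is normalized more carefully, for instance by dividing by $\rho$ so that only one factor $\tilde K_1$ appears. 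I would check the exact normalization against Theorem \ref{existrho}.

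Next pass to the limit. Weak $H^{1,2}_0$ compactness, and weak-$*$ $L^\infty$ compactness in (ii), handle the diffusion term and the drift term; the latter because $\mathbf H\psi\in L^2$ for $\psi\in C_0^\infty$. The zero-order term is the delicate one. In (ii) use $u_n\to u$ a.e., the uniform $L^\infty$ bound and dominated convergence with $c\in L^1$. In (iii) use $c\in L^{2d/(d+2)}$ and $u_n\to u$ in $L^{2^*}$ weakly, with strong convergence in $L^2$ and a.e. for the product. Since $c_n\to c$ strongly in $L^{2d/(d+2)}$, the product converges.

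\textbf{Uniqueness, part (i).} This is the main obstacle, because $v$ is only $H^{1,2}_0$ with $cv\in L^1$, and is not bounded, so $v$ cannot be tested against itself. I would argue by duality as in \cite{L25jm}. Given $g\in C_0^\infty(U)$, solve the adjoint problem $-\operatorname{div}(A^T\nabla w)-\operatorname{div}(\mathbf H w)+cw=g$. After the transformation its drift $-\rho\mathbf B$ is again divergence-free, so the same procedure produces bounded solutions $w_n\in H^{1,2}_0\cap L^\infty$ with uniform bounds. Then test \eqref{weakdzero} with $w_n$ and the adjoint equation with $v$; justifying the test with $w_n$ is done by density, using $w_n$ bounded and $cv\in L^1$. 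This gives $\int vg=\int (c-c_n)vw_n$. The right-hand side tends to $0$ by dominated convergence, since $|w_n|\le C$ and $cv\in L^1$. Hence $\int vg=0$ for all $g$, and $v=0$. Uniqueness in (ii) and (iii) follows from (i), since in (iii) $cv\in L^1$ by H\"older's inequality. The technical points to verify are the approximation of test functions and that the transformed adjoint problem inherits the a priori bounds.
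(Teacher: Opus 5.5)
Your proposal is correct in outline, but it proves by hand what the paper imports. After Theorems~\ref{existrho} and \ref{divfretransform}, the paper applies Lemma~\ref{theomaineun} (for (i) and (ii)) and Lemma~\ref{amsprocethm} (for (iii)) to the transformed problem with $\hat A=\rho A$, $\hat{\mathbf B}=\rho\mathbf B$, $\hat c=\rho c$, $\hat f=\rho f$. Its only work of its own is the bookkeeping $\lambda\min_{\overline{U}}\rho\ge\lambda/\tilde K_1$, $\|\rho f\|\le\max_{\overline{U}}\rho\,\|f\|$ and $\rho c\ge\alpha\min_{\overline{U}}\rho$, plus deriving uniqueness in (ii) and (iii) from (i).

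You instead re-derive those lemmas in this setting. Your approximants come from \eqref{maineq2} itself with $c_n=c\wedge n$, which is classically solvable. You use the transformed equation only for $n$-uniform bounds, via the antisymmetry of the divergence-free drift. Uniqueness comes from duality with adjoint approximants $w_n$, through $\int_U gv\,dx=\int_U(c_n-c)vw_n\,dx\to0$. What this buys you is independence from any existence theory for a divergence-free drift that lies only in $L^2$; there the term $\langle\rho\mathbf B,\nabla u\rangle\varphi$ is not a bounded form on $H^{1,2}_0(U)$, and that theory is exactly what the imported lemmas supply. The paper's route is much shorter; yours is more self-contained but needs the following details.

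First, the contraction constant needs no renormalization of $\rho$. Theorem~\ref{existrho}(ii) gives the Harnack ratio $\max_{\overline{U}}\rho\le\tilde K_1\min_{\overline{U}}\rho$, so your computation yields $\|u\|_{L^\theta(U)}\le\frac{\max_{\overline{U}}\rho}{\alpha\min_{\overline{U}}\rho}\|f\|_{L^\theta(U)}\le\frac{\tilde K_1}{\alpha}\|f\|_{L^\theta(U)}$, exactly as in the paper. The extra factor came from using the two bounds $\tilde K_1^{-1}\le\rho\le\tilde K_1$ separately.

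Second, discard the option of mollifying $\mathbf B$ and using Lax--Milgram. Mollification does not preserve $\operatorname{div}(\rho\mathbf B)=0$ in $U$, so the coercivity you rely on is lost. The route through \eqref{maineq2} with bounded $c_n$ is the one that works. For the adjoint problem, whose drift is in divergence form, existence of $w_n\in H^{1,2}_0(U)$ follows from Propositions~\ref{weakmaxim} and \ref{laxfredexist} with the term $c_n\ge0$ added.

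Third, the adjoint transformation is a change of unknown, $w_n=\rho\tilde w_n$, not a multiplication of the equation by $\rho$. Because $\nabla\rho$ is only in $L^2$, you do not know a priori that $\tilde w_n\in H^{1,2}$. Its truncations are fine, however. Set $T_m(s):=\max(-m,\min(s,m))$; on $\{|\tilde w_n|<m\}$ one has $|w_n|\le m\tilde K_1$, so $T_m(\tilde w_n)\in H^{1,2}_0(U)$. Test the adjoint equation with $T_m((\tilde w_n-k)^+)$. The drift contribution becomes $\int_U\langle\rho\mathbf B,\nabla\Phi(\tilde w_n)\rangle\,dx=0$ for a bounded Lipschitz $\Phi$ with $\Phi(0)=0$. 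Stampacchia's iteration then gives the $n$-uniform bound on $\|w_n\|_{L^\infty(U)}$ that your dominated-convergence step requires. The same truncated test functions also justify the primal energy estimate before $u_n\in L^\infty(U)$ is known.
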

Although the main result of the paper concerns the existence and uniqueness of weak solutions, it is particularly noteworthy that no additional structural conditions, such as the $VMO$ assumption on the matrix of functions $A$, are imposed. Furthermore, the main result Theorem \ref{mainexisthm} challenges the conventional belief that the optimal regularity condition for the zero-order term is $c \in L^{\frac{d}{2}}(U)$ by demonstrating that the weaker assumptions $c \in L^1(U)$ or even $c \in L^{\frac{2d}{d+2}}(U)$ are sufficient.
This observation suggests the possibility of further developments that could partially relax the regularity condition on the zero-order coefficient, namely $c \in L^{\frac{d}{2}}(U)$, in order to obtain the well-posedness results for non-divergence form equations as established in \cite{V92, K21}. \\
This paper is organized as follows:
Section \ref{sec2} introduces the essential notations and definitions used throughout the paper.
Section \ref{sec3} establishes fundamental inequalities and shows Lemma \ref{mainkeyprese}, a crucial inequality for this paper.
Section \ref{sec4} sets up the divergence-free transformation and completes the proof of Theorem \ref{mainexisthm}.
Section \ref{sec5} concludes with a discussion.

\section{Notations and definitions} \label{sec2}

In this paper, we work within the $d$-dimensional Euclidean space $\mathbb{R}^d$, where $d \geq 1$, equipped with the standard inner product $\langle \cdot, \cdot \rangle$ and the corresponding Euclidean norm $\| \cdot \|$. For a point $x_0 \in \mathbb{R}^d$ and radius $r > 0$, we denote by $B_r(x_0)$ the open ball $\{ x \in \mathbb{R}^d : \|x - x_0\| < r \}$, and we write $B_r:=\{ x \in \mathbb{R}^d : \|x\| < r \}$. The Lebesgue measure on $\mathbb{R}^d$ is denoted by $dx$, and for a measurable set $E \subset \mathbb{R}^d$, $dx(E)$ is written as $|E|$. The indicator function of a set $W$ is denoted by $1_W$. Let $U$ be an open subset of $\mathbb{R}^d$. We denote by $C(U)$ and $C(\overline{U})$ the spaces of continuous functions on $U$ and its closure $\overline{U}$, respectively. For $k \in \mathbb{N} \cup \{\infty\}$, the space $C^k(U)$ consists of functions that are $k$-times continuously differentiable on $U$, while $C_0^k(U)$ denotes the subspace of $C^k(U)$ consisting of functions with compact support in $U$. Let $s \in [1, \infty]$. We denote by $L^s(U)$ the standard Lebesgue space with norm $\|\cdot\|_{L^s(U)}$, and by $L^s(U, \mathbb{R}^d)$ the space of $\mathbb{R}^d$-valued functions whose components lie in $L^s(U)$, equipped with the norm $\|\mathbf{F}\|_{L^s(U)}:=\| \| \mathbf{F} \| \|_{L^s(U)}$. For each $i \in \{1,2, \ldots d \}$, $\partial_i$ denotes the weak partial derivative with respect to the $i$-th component. The weak gradient of a function $u$ is denoted by $\nabla u:=(\partial_1 u, \ldots, \partial_d u)$. The Sobolev space $H^{1,s}(U)$ consists of functions in $L^s(U)$ whose weak partial derivatives also belong to $L^s(U)$. The space $H_0^{1,2}(U)$ denotes the closure of $C_0^\infty(U)$ in the $H^{1,2}(U)$-norm.  By the Poincaré inequality, we write $\|u \|_{H^{1,2}_0(U)}:= \| \nabla u\|_{L^2(U)}$. The dual space of $H_0^{1,2}(U)$ is denoted by $H^{-1,2}(U)$, and the duality pairing is represented by $\langle \cdot, \cdot \rangle_{H^{-1,2}(U)}$.

\begin{defn} \label{weaksoldefinition}
Let $A = (a_{ij})_{1 \leq i,j \leq d}$ be a matrix of bounded and measurable functions on $\mathbb{R}^d$. Let $\mathbf{H} \in L^2(U, \mathbb{R}^d)$, $c \in L^1(U)$, and $f \in L^1(U)$. We say that $u$ is a weak solution to \eqref{maineq2} if $u \in H_0^{1,2}(U)$ and $c u \in L^1(U)$, and the following identity holds:
\begin{equation} \label{maineqfir}
\int_U \langle A \nabla u, \nabla \psi \rangle + \langle \mathbf{H}, \nabla u \rangle \psi + c u \psi \, dx 
= \int_U f \psi \, dx 
\quad \text{for all } \psi \in C_0^\infty(U).
\end{equation}
\end{defn}

\begin{defn} \label{defnweaksol}
For a vector field $\mathbf{H} \in L^1_{\mathrm{loc}}(U, \mathbb{R}^d)$, its divergence $\operatorname{div} \mathbf{H}$ is understood in the weak sense. That is, if $h \in L^1_{\mathrm{loc}}(U)$ satisfies
\[
\int_U \langle \mathbf{H}, \nabla \varphi \rangle \, dx = -\int_U h \varphi \, dx \quad \text{for all } \varphi \in C_0^{\infty}(U),
\]
we write $\operatorname{div} \mathbf{H} = h$ in $U$. A vector field $\mathbf{H}$ is called divergence-free if $\operatorname{div} \mathbf{H} = 0$.
\end{defn}

\section{Fundamental inequalities} \label{sec3}

In this section, we mainly assume the condition {\bf (T1)} below, which is weaker than {\bf (T)}.\\ \\
{\bf (T1)}:
{\it
$U$ is a bounded open subset of $\mathbb{R}^d$ with $d \geq 3$, $\mathbf{H} \in L^d(U, \mathbb{R}^d)$, and $A = (a_{ij})_{1 \leq i,j \leq d}$ is a (possibly non-symmetric) matrix of measurable functions on $\mathbb{R}^d$ such that there exist constants $M > 0$ and $\lambda > 0$ satisfying \eqref{elliptici}.
}
\begin{prop} \label{bilineaprop}
Under the assumption {\bf (T1)}, define a bilinear form $\mathcal{B}: H^{1,2}_0(U) \times H^{1,2}_0(U) \rightarrow \mathbb{R}$ given~by
\begin{equation} \label{bilinearf}
\mathcal{B}(f,g):= \int_{U} \langle A \nabla f + f \mathbf{H}, \nabla g \rangle \,dx, \quad f,g \in H^{1,2}_0(U).
\end{equation}
Then, the following statements hold:
\begin{itemize}
\item[(i)]
\begin{equation} \label{desiredestim1}
|\mathcal{B}(f,g)| \leq K \| \nabla f \|_{L^2(U)}  \| \nabla g \|_{L^2(U)} \quad \text{ for all $f,g \in H^{1,2}_0(U)$},
\end{equation}
where $K:=dM + \dfrac{2(d-1)}{d-2} \|\mathbf{H} \|_{L^d(U)}$.
\item[(ii)]
Let $N \geq 0$ be a constant such that
\begin{equation} \label{vectorfieh}
\left( \int_U 1_{\{ \|\mathbf{H}\| \geq N \}} \| \mathbf{H} \|^d \, dx \right)^{\frac{2}{d}} 
\leq \frac{\lambda^2}{16} \left( \frac{d-2}{d-1} \right)^2.
\end{equation}
Then,
\begin{equation} \label{desiredestim2}
\mathcal{B}(f,f) +\frac{N^2}{\lambda} \| f\|^2_{L^2(U)}  \geq \frac{\lambda}{2}\|\nabla f \|^2_{L^2(U)} \quad \text{ for all $f \in H^{1,2}_0(U)$}.
\end{equation}
\end{itemize}
\end{prop}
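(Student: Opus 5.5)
The plan is to combine Hölder's inequality with the Sobolev inequality in the explicit form obtained from the Gagliardo--Nirenberg $L^1$-estimate. Extend $f \in H^{1,2}_0(U)$ by zero. The $L^1$-Sobolev inequality $\|w\|_{L^{\frac{d}{d-1}}} \le \frac12 \prod_{i}\|\partial_i w\|_{L^1}^{1/d}\le \frac12\|\nabla w\|_{L^1}$ is applied to $w=|f|^{\gamma}$ with $\gamma = \frac{2(d-1)}{d-2}$. Hölder's inequality with $\frac{d-2}{2d}+\frac12 = \frac{d-1}{d}$ then gives
\[
\|f\|_{L^{\frac{2d}{d-2}}(U)} \le \frac{d-1}{d-2}\,\|\nabla f\|_{L^2(U)} \le \frac{2(d-1)}{d-2}\,\|\nabla f\|_{L^2(U)}.
\]
This is done first for $f \in C_0^\infty(U)$ and then extended by density. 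The factor $\frac12$ is what makes the constants in (ii) come out exactly. If one only uses the weaker constant $\frac{2(d-1)}{d-2}$, part (i) still holds but (ii) would give $\frac{\lambda}{4}$ instead of $\frac{\lambda}{2}$. So I will keep the sharper constant $\frac{d-1}{d-2}$ and use the displayed chain.

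For (i), I bound the two terms of $\mathcal B$ separately. Since $|a_{ij}|\le M$, the operator norm of $A(x)$ is bounded by its Frobenius norm, which is at most $dM$. Hence $\left|\int_U\langle A\nabla f,\nabla g\rangle dx\right| \le dM\|\nabla f\|_{L^2}\|\nabla g\|_{L^2}$. For the drift term I use Hölder with exponents $d$, $\frac{2d}{d-2}$, $2$:
\[
\left|\int_U f\langle \mathbf H,\nabla g\rangle dx\right| \le \|\mathbf H\|_{L^d(U)}\,\|f\|_{L^{\frac{2d}{d-2}}(U)}\,\|\nabla g\|_{L^2(U)}.
\]
Combining this with the Sobolev bound above gives \eqref{desiredestim1} with the stated $K$.

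For (ii), ellipticity gives $\mathcal B(f,f)\ge \lambda\|\nabla f\|_{L^2}^2 - \int_U |f|\,\|\mathbf H\|\,\|\nabla f\|\,dx$. I split $\mathbf H = \mathbf H 1_{\{\|\mathbf H\|<N\}} + \mathbf H 1_{\{\|\mathbf H\|\ge N\}}$.

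On the bounded part, Cauchy--Schwarz and Young's inequality give
\[
\int_U |f|\,\|\mathbf H\|1_{\{\|\mathbf H\|<N\}}\|\nabla f\|\,dx \le N\|f\|_{L^2}\|\nabla f\|_{L^2}\le \frac{\lambda}{4}\|\nabla f\|_{L^2}^2+\frac{N^2}{\lambda}\|f\|_{L^2}^2 .
\]
On the large part, the same Hölder--Sobolev argument as in (i), now with the sharp constant $\frac{d-1}{d-2}$, bounds the term by $\frac{d-1}{d-2}\|\mathbf H1_{\{\|\mathbf H\|\ge N\}}\|_{L^d}\|\nabla f\|_{L^2}^2$. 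Taking square roots in \eqref{vectorfieh} gives $\|\mathbf H1_{\{\|\mathbf H\|\ge N\}}\|_{L^d}\le\frac{\lambda}{4}\frac{d-2}{d-1}$, so this term is at most $\frac{\lambda}{4}\|\nabla f\|_{L^2}^2$. Subtracting both contributions yields $\mathcal B(f,f)+\frac{N^2}{\lambda}\|f\|^2_{L^2}\ge \frac{\lambda}{2}\|\nabla f\|^2_{L^2}$.

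The only real obstacle is bookkeeping of the Sobolev constant, as explained in the first paragraph. Everything else is routine.
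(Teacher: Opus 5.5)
Your proof is correct. Part (i) matches the paper's argument: you use $dM$ for the matrix term and H\"older with exponents $d,\frac{2d}{d-2},2$ plus the Sobolev inequality for the drift term.

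Part (ii) takes a different order of operations. You first split $\mathbf H$ along $\{\|\mathbf H\|<N\}$ and $\{\|\mathbf H\|\ge N\}$, and then bound the large part linearly by H\"older--Sobolev. That costs $C_S\|\mathbf H 1_{\{\|\mathbf H\|\ge N\}}\|_{L^d}\|\nabla f\|_{L^2}^2$, which is why you need the refined constant $C_S=\frac{d-1}{d-2}$. Your derivation of that constant is right: it comes from the two-sided factor $\frac12$ in the $L^1$ Gagliardo--Nirenberg inequality applied to $|f|^{\gamma}$.

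The paper instead applies Young's inequality to the whole drift term first,
$\int_U\|\mathbf H\|\,|f|\,\|\nabla f\|\,dx\le\frac{\lambda}{4}\|\nabla f\|^2_{L^2(U)}+\frac1\lambda\int_U\|\mathbf H\|^2|f|^2\,dx$,
and only afterwards splits $\|\mathbf H\|^2|f|^2$. The large part then enters quadratically:
$\frac1\lambda\bigl(\frac{2(d-1)}{d-2}\bigr)^2\|\mathbf H 1_{\{\|\mathbf H\|\ge N\}}\|^2_{L^d(U)}\|\nabla f\|^2_{L^2(U)}\le\frac{\lambda}{4}\|\nabla f\|^2_{L^2(U)}$.
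So the textbook constant $\frac{2(d-1)}{d-2}$ already suffices there.

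Your remark that the weaker constant would only give $\frac{\lambda}{4}$ is therefore a feature of your ordering (split, then Young on the bounded part only), not of the problem. The paper's route is more economical because it needs nothing beyond the standard Sobolev constant. Yours buys a slightly better constant $K$ in (i), at the cost of having to prove the sharpened Sobolev constant yourself.
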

\begin{proof}
(i) Let $f, g \in H^{1,2}_0(U)$.  
By the Sobolev inequality \cite[Section 5.6, Theorem 1]{E10} (cf. \cite[Theorem 4.8]{EG15}), we obtain
\begin{equation} \label{imptsobolmud}
\|g\|_{L^{\frac{2d}{d-2}}(U)} \leq \frac{2(d-1)}{d-2} \|\nabla g\|_{L^2(U)}.
\end{equation}
Applying the Cauchy–Schwarz inequality and the H\"{o}lder inequality, we have
\begin{align*}
\left| \int_U \langle f\mathbf{H}, \nabla g \rangle  \, dx \right|
&\leq \|\mathbf{H}\|_{L^d(U)} \|f\|_{L^{\frac{2d}{d-2}}(U)} \|\nabla g\|_{L^2(U)}  \\
&\leq \frac{2(d-1)}{d-2} \|\mathbf{H}\|_{L^d(U)} \|\nabla f\|_{L^2(U)} \|\nabla g\|_{L^2(U)}.
\end{align*}
In addition, we estimate
\[
\left| \int_U \langle A \nabla f, \nabla g \rangle \, dx \right|
\leq dM \int_U \|\nabla f\| \, \|\nabla g\| \, dx
\leq dM \|\nabla f\|_{L^2(U)} \|\nabla g\|_{L^2(U)}.
\]
Hence, the desired estimate \eqref{desiredestim1} follows. \\ \\
(ii) Let $f \in H^{1,2}_0(U)$.  
Using the Cauchy–Schwarz inequality and Young's inequality, we obtain
\begin{align} \label{driftestimd2}
\left| \int_{U} \langle \mathbf{H}, \nabla f \rangle f \, dx \right|
&\leq \int_U \|\mathbf{H}\|\, |f|\, \|\nabla f\| \, dx \notag \\
&\leq \frac{\lambda}{4} \int_U \|\nabla f\|^2 dx + \frac{1}{\lambda} \int_U \|\mathbf{H}\|^2 |f|^2 dx.
\end{align}
Define the function $\phi: [0, \infty) \to \mathbb{R}$ by
\[
\phi(s) := \left( \int_U 1_{\{ \|\mathbf{H}\| \geq s \}} \|\mathbf{H}\|^d dx \right)^{\frac{2}{d}}, \quad s \in [0, \infty).
\]
Since $\|\mathbf{H}\| \in L^d(U)$ and is finite almost everywhere, it follows from the Lebesgue dominated convergence theorem that
\[
\lim_{s \to \infty} \phi(s) = 0.
\]
Now choose $N \geq 0$ such that
\begin{equation} \label{imptindex2}
\phi(N) \leq \frac{\lambda^2}{16} \left( \frac{d-2}{d-1} \right)^2.
\end{equation}
Using again the H\"{o}lder inequality, \eqref{imptindex2}, and the Sobolev inequality \eqref{imptsobolmud}, we estimate
\begin{align*}
\int_U \|\mathbf{H}\|^2 |f|^2 dx
&= \int_U 1_{\{ \|\mathbf{H}\| \geq N \}} \|\mathbf{H}\|^2 |f|^2 dx + \int_U 1_{\{ \|\mathbf{H}\| < N \}} \|\mathbf{H}\|^2 |f|^2 dx \\
&\leq \left( \int_U 1_{\{ \|\mathbf{H}\| \geq N \}} \|\mathbf{H}\|^d dx \right)^{\frac{2}{d}} \|f\|_{L^{\frac{2d}{d-2}}(U)}^2 + N^2 \int_U |f|^2 dx \\
&\leq \frac{\lambda^2}{4} \|\nabla f\|_{L^2(U)}^2 + N^2 \int_U |f|^2 dx.
\end{align*}
Substituting this into \eqref{driftestimd2}, we obtain
\begin{align*}
\int_U \langle \mathbf{H}, \nabla f \rangle f \, dx
&\geq -\frac{\lambda}{2} \int_U \|\nabla f\|^2 dx - \frac{N^2}{\lambda} \int_U |f|^2 dx.
\end{align*}
Since
\[
\int_U \langle A \nabla f, \nabla f \rangle dx \geq \lambda \int_U \|\nabla f\|^2 dx,
\]
the desired estimate \eqref{desiredestim2} follows.
\end{proof}
\centerline{}
The following existence result is well known and can be found in \cite{BKRS15,GT,S65}. For clarity and the reader's convenience, we state the details here.
\begin{prop} \label{laxfredexist}
Assume {\bf (T1)}. Let $\mathcal{B}: H^{1,2}_0(U) \times H^{1,2}_0(U) \to \mathbb{R}$ denote the bilinear form defined by \eqref{bilinearf}. Suppose that if $w \in H^{1,2}_0(U)$ satisfies
$$
\mathcal{B}(w, \varphi)=0 \quad \text{ for all $\varphi \in H^{1,2}_0(U)$},
$$
then $w=0$ in $U$. Then, for each $\psi \in H^{-1,2}(U)$, there exists a unique $u_{\psi} \in H^{1,2}_0(U)$ such that
$$
\mathcal{B}(u_{\psi}, \varphi) = \langle \psi, \varphi \rangle_{H^{-1,2}(U)} \quad \text{ for all $\varphi \in H^{1,2}_0(U)$}.
$$
\end{prop}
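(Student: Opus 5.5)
The plan is to prove Proposition \ref{laxfredexist} with the Fredholm alternative. The idea is to shift the form by a large multiple of the $L^2$ inner product so that it becomes coercive, and then use the compactness of the embedding $H^{1,2}_0(U)\hookrightarrow L^2(U)$. I fix $N\ge 0$ as in Proposition \ref{bilineaprop}(ii), which is possible because $\|\mathbf{H}\|\in L^d(U)$. Set $\gamma:=N^2/\lambda$ and
\[
\mathcal{B}_\gamma(f,g):=\mathcal{B}(f,g)+\gamma\int_U fg\,dx .
\]
By Proposition \ref{bilineaprop}(i) and the Poincaré inequality, $\mathcal{B}_\gamma$ is bounded on $H^{1,2}_0(U)$. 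By \eqref{desiredestim2} it satisfies $\mathcal{B}_\gamma(f,f)\ge \frac{\lambda}{2}\|\nabla f\|^2_{L^2(U)}$. The Lax--Milgram theorem then gives a bounded linear isomorphism $L_\gamma^{-1}: H^{-1,2}(U)\to H^{1,2}_0(U)$ such that $\mathcal{B}_\gamma(L_\gamma^{-1}\psi,\varphi)=\langle\psi,\varphi\rangle_{H^{-1,2}(U)}$ for all $\varphi\in H^{1,2}_0(U)$.

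Next I rewrite the original problem. For $\psi\in H^{-1,2}(U)$, a function $u\in H^{1,2}_0(U)$ satisfies $\mathcal{B}(u,\cdot)=\psi$ exactly when $\mathcal{B}_\gamma(u,\cdot)=\psi+\gamma\, \iota u$. Here $\iota:H^{1,2}_0(U)\to H^{-1,2}(U)$ is the map $u\mapsto \int_U u\,\cdot\,dx$. Equivalently,
\[
u-Tu=L_\gamma^{-1}\psi, \qquad T:=\gamma L_\gamma^{-1}\iota .
\]
The map $\iota$ factors as the embedding $H^{1,2}_0(U)\hookrightarrow L^2(U)$ followed by the continuous map $L^2(U)\to H^{-1,2}(U)$. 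The first map is compact by the Rellich--Kondrachov theorem, which holds on any bounded open $U$ for $H^{1,2}_0$ functions (extend by zero to a large ball). Hence $T:H^{1,2}_0(U)\to H^{1,2}_0(U)$ is compact.

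Finally I apply the Fredholm alternative to $I-T$. The kernel of $I-T$ is trivial: if $w=Tw$, then $\mathcal{B}_\gamma(w,\varphi)=\gamma\int_U w\varphi\,dx$, that is, $\mathcal{B}(w,\varphi)=0$ for all $\varphi$, and the hypothesis gives $w=0$. The Riesz--Schauder theory then says $I-T$ is surjective with bounded inverse. So $u_\psi:=(I-T)^{-1}L_\gamma^{-1}\psi$ exists, and it is unique because the kernel is trivial. Reversing the equivalence above shows that $u_\psi$ solves the required identity.

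The only step needing care is the compactness of $T$, and it is routine. The main point to check is that the unbounded drift does not spoil coercivity, and Proposition \ref{bilineaprop}(ii) already handles exactly this, using only $\mathbf{H}\in L^d$. No symmetry of $A$ is needed anywhere.
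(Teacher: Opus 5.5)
Your proposal is correct and follows essentially the same route as the paper: you shift by $\gamma = N^2/\lambda$ and apply Lax--Milgram (your $L_\gamma^{-1}$ is the paper's $K$ and your $\iota$ is its $J$), you show that $u-\gamma (K\circ J)u = K\psi$ is equivalent to $\mathcal{B}(u,\cdot)=\psi$, you get compactness of $K\circ J$ from Rellich--Kondrachov, and you apply the Fredholm alternative using the assumed trivial kernel. Uniqueness follows, exactly as you say, by applying the hypothesis to the difference of two solutions.
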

\begin{proof}
Let $N \geq 0$ be the constant appearing in \eqref{vectorfieh}, and define $\gamma := \frac{N^2}{\lambda}$.
Define a bilinear form $\mathcal{B}_{\gamma}: H^{1,2}_0(U) \times H^{1,2}_0(U) \rightarrow \mathbb{R}$ given by
$$
\mathcal{B}_{\gamma}(f,g):=\mathcal{B}(f,g) + \gamma \int_U f g dx, \quad  f,g \in H^{1,2}_0(U).
$$
By the Lax–Milgram theorem (cf. \cite[Corollary 5.8]{Br11}) and Proposition \ref{bilineaprop}, for each $\psi \in H^{-1,2}(U)$, there exists a unique $u_{\gamma, \psi} \in H^{1,2}_0(U)$ such that
\begin{equation} \label{variationiden}
\mathcal{B}_{\gamma}(u_{\gamma, \psi}, \varphi) = \langle \psi, \varphi \rangle_{H^{-1,2}(U)} \quad \text{ for all $\varphi \in H^{1,2}_0(U)$}.
\end{equation}
Substituting $\varphi = u_{\gamma, \psi}$ into \eqref{variationiden} and applying Proposition~\ref{bilineaprop}(ii), we obtain
\begin{align*}
\frac{\lambda}{2} \| \nabla u_{\gamma, \psi} \|^2_{L^2(U)} \leq \mathcal{B}_{\gamma}(u_{\gamma, \psi}, u_{\gamma, \psi}) = \langle \psi, u_{\gamma, \psi} \rangle_{H^{-1,2}(U)} \leq \|\psi \|_{H^{-1,2}(U)} \| \nabla u_{\gamma, \psi} \|_{L^{2}(U)},
\end{align*}
and hence,
\begin{equation} \label{energyestima}
\| \nabla u_{\gamma, \psi} \|_{L^2(U)} \leq \frac{2}{\lambda} \| \psi\|_{H^{-1,2}(U)}.
\end{equation}
Define $K:H^{-1,2}(U) \rightarrow H^{1,2}_0(U)$ given by
\begin{equation} \label{defnofk}
K \psi:= u_{\gamma, \psi}, \quad \psi \in H^{-1,2}(U).
\end{equation}
Then, by \eqref{energyestima}, we have
$$
\| K \psi \|_{H^{1,2}_0(U)} \leq \frac{2}{\lambda} \| \psi\|_{H^{-1,2}(U)} \quad \text{ for all } \psi \in H^{-1,2}(U).
$$
Define the operator $J: H^{1,2}_0(U) \to H^{-1,2}(U)$ by, for each $u \in H^{1,2}_0(U)$,
\begin{equation} \label{inclusionmap}
\langle J(u), \varphi \rangle_{H^{-1,2}(U)} = \int_{U} u \varphi \,dx  \quad \text{ for all $\varphi \in H^{1,2}_0(U)$}.
\end{equation}
By the Rellich-Kondrachov compactness theorem, $J$ is a compact operator, and hence so is $K \circ J$. We now state the following claim. \\ \\
{\underline{\sf Claim}:} Let $u \in H_0^{1,2}(U)$ and $\psi \in H^{-1,2}(U)$. Then, the following statements {\rm (a)--(b)} are equivalent:
\begin{equation} \label{bvpequival}
\left\{
\begin{aligned}
    u- \gamma \big( K \circ J  \big)u &= K \psi \quad \text{ in } H^{1,2}_0(U). &&  \;\;\qquad {\rm (a)} \\[1em]
    \mathcal{B}(u, \varphi)&= \langle \psi, \varphi \rangle_{H^{-1,2}(U)} \qquad \text{ for all $\varphi \in H^{1,2}_0(U)$.} && \;\;\qquad {\rm (b)}
\end{aligned}
\right.
\end{equation}
To prove the claim, first suppose that {\rm (a)} holds. Then, we have
\[
u = K \big( \psi + \gamma J(u) \big).
\]
By the definition of $K$ (see \eqref{defnofk} and \eqref{variationiden}), it follows that
\begin{equation} \label{variatcondia}
\mathcal{B}_{\gamma}(u, \varphi) = \langle \psi + \gamma J(u), \varphi \rangle_{H^{-1,2}(U)} \quad \text{for all } \varphi \in H^{1,2}_0(U),
\end{equation}
which is equivalent to {\rm (b)} by \eqref{inclusionmap}.
Conversely, assume that {\rm (b)} holds. Then, \eqref{variatcondia} is satisfied, and hence, by the definition of $K$, we obtain
\[
u = K \big( \psi + \gamma J(u) \big),
\]
which implies {\rm (a)} by \eqref{inclusionmap}. This completes the proof of the claim.  \\ \\
Let $I: H^{1,2}_0(U) \to H^{1,2}_0(U)$ denote the identity operator. Evaluating {\rm (a)} with $\psi = 0$, it follows from  the equivalence established in \eqref{bvpequival} that
\[
\text{$u \in H^{1,2}_0(U)$ satisfies } \big(I - \gamma (K \circ J) \big) u = 0
\]
if and only if
\[
\mathcal{B}(u, \varphi) = 0 \quad \text{for all } \varphi \in H^{1,2}_0(U).
\]
Thus, by assumption, it follows that
\[
\left\{ u \in H^{1,2}_0(U) : \big(I - \gamma (K \circ J)\big) u = 0 \right\} = \{0\}.
\]
Since $\gamma (K \circ J): H^{1,2}_0(U) \to H^{1,2}_0(U)$ is a compact operator, the Fredholm alternative (see \cite[Theorem~6.6]{Br11}) implies that for each $\psi \in H^{-1,2}(U)$, there exists $u_{\psi} \in H^{1,2}_0(U)$ such that
\[
\big(I - \gamma (K \circ J)\big) u_\psi = K \psi.
\]
Therefore, by the equivalence established in \eqref{bvpequival}, the desired assertion follows.
\end{proof}

\begin{prop}  {\bf  (Poincaré-type inequality)}
\label{mainpoincare}
The following inequality holds:
\begin{align*} 
\|f\|_{L^2(U)} \leq \frac{2(d-1)}{d} |U|^{\frac{1}{d}} \| \nabla f\|_{L^2(U)} \quad \text{for all } f \in H^{1,2}_0(U).
\end{align*}
\end{prop}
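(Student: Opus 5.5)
The plan is to combine the H\"{o}lder inequality with the Sobolev inequality \eqref{imptsobolmud} used in the proof of Proposition \ref{bilineaprop}. Fix $f \in H^{1,2}_0(U)$. Since $d \geq 3$, the Sobolev exponent $2^* = \frac{2d}{d-2}$ is finite and strictly larger than $2$, and $U$ has finite measure. So the $L^2$-norm can be controlled by the $L^{2^*}$-norm at the cost of a power of $|U|$.

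The key step is H\"{o}lder's inequality applied to $|f|^2 \cdot 1_U$. We use the exponents $\frac{2^*}{2} = \frac{d}{d-2}$ and its conjugate $\frac{d}{2}$. This gives
\[
\int_U |f|^2\,dx \leq \Big(\int_U |f|^{\frac{2d}{d-2}}dx\Big)^{\frac{d-2}{d}} |U|^{\frac{2}{d}},
\]
that is,
\[
\|f\|_{L^2(U)} \leq |U|^{\frac1d}\,\|f\|_{L^{\frac{2d}{d-2}}(U)}.
\]
Inserting \eqref{imptsobolmud} then yields
\[
\|f\|_{L^2(U)} \leq \frac{2(d-1)}{d-2}\,|U|^{\frac1d}\,\|\nabla f\|_{L^2(U)}.
\]

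This constant is not the one in the statement, since $\frac{2(d-1)}{d-2}$ is larger than $\frac{2(d-1)}{d}$. The main obstacle is therefore sharpening the constant. For this I would not route through $L^{2^*}$. Instead I would use the Gagliardo--Nirenberg $L^1$-Sobolev inequality
\[
\|g\|_{L^{\frac{d}{d-1}}} \leq \|\nabla g\|_{L^1} \quad \text{for } g \in C^1_0,
\]
with constant $1$, in the form given in \cite[Theorem 4.8]{EG15}.

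I would apply it to $g = |f|^{\gamma}$ with a suitably chosen $\gamma$, working first with $f \in C_0^\infty(U)$. This gives
\[
\Big(\int |f|^{\frac{\gamma d}{d-1}}\Big)^{\frac{d-1}{d}} \leq \gamma \int |f|^{\gamma-1}\|\nabla f\| \leq \gamma \,\|f\|^{\gamma-1}_{L^{2(\gamma-1)}}\,\|\nabla f\|_{L^2},
\]
where the second step is Cauchy--Schwarz. Choosing $\gamma = 2$ bounds $\|f\|_{L^{\frac{2d}{d-1}}}^2$ by $2\|f\|_{L^2}\|\nabla f\|_{L^2}$. Next I would interpolate $\|f\|_{L^2}$ against $\|f\|_{L^{\frac{2d}{d-1}}}$ using H\"{o}lder with $|U|$. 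The exponent choice here may need adjusting, either by taking $\gamma = \frac{2(d-1)}{d}$-type powers or by iterating, to produce exactly the factor $\frac{2(d-1)}{d}|U|^{1/d}$.

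Concretely, the fallback I would try is $g=|f|^{\frac{2(d-1)}{d}}$, for which $g^{\frac{d}{d-1}}=|f|^2$. Then
\[
\|f\|_{L^2}^{\frac{2(d-1)}{d}} \leq \frac{2(d-1)}{d}\int |f|^{\frac{d-2}{d}}\|\nabla f\|.
\]
Applying Cauchy--Schwarz and then H\"{o}lder on $\int|f|^{\frac{2(d-2)}{d}}$ gives
\[
\int |f|^{\frac{2(d-2)}{d}} \leq |U|^{\frac2d}\,\|f\|_{L^2}^{\frac{2(d-2)}{d}}.
\]
Combining the last two displays,
\[
\|f\|_{L^2}^{\frac{2(d-1)}{d}} \leq \frac{2(d-1)}{d}\,|U|^{\frac1d}\,\|f\|_{L^2}^{\frac{d-2}{d}}\,\|\nabla f\|_{L^2}.
\]
Dividing by $\|f\|_{L^2}^{\frac{d-2}{d}}$, which is legitimate when $f \neq 0$, gives exactly the stated inequality. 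The general case $f\in H^{1,2}_0(U)$ follows by density of $C_0^\infty(U)$, since both sides are continuous in the $H^{1,2}$-norm.
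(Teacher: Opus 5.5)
Your final argument with $g=|f|^{2(d-1)/d}$ is correct and is essentially the paper's proof. The paper applies the Gagliardo--Nirenberg--Sobolev inequality at $p=\frac{2d}{d+2}$; its Sobolev conjugate is $2$ and its constant $\frac{p(d-1)}{d-p}$ equals $\frac{2(d-1)}{d}$, and then H\"{o}lder gives $\|\nabla f\|_{L^{\frac{2d}{d+2}}(U)}\le |U|^{\frac1d}\|\nabla f\|_{L^2(U)}$. You have inlined the proof of that GNS case with the same power $\gamma=\frac{2(d-1)}{d}$, placing the factor $|U|^{\frac1d}$ on $|f|^{\frac{d-2}{d}}$ instead of on $\nabla f$, so the detours through $L^{\frac{2d}{d-2}}$ and $\gamma=2$ can be dropped.
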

\begin{proof}
By applying the Gagliardo-Nirenberg-Sobolev inequality (\cite[Section 5.6, Theorem 1]{E10}) together with the H\"{o}lder inequality, we obtain
\begin{align*}
\|f\|_{L^2(U)} \leq \frac{\frac{2d}{d+2}(d-1)}{d - \frac{2d}{d+2}} \| \nabla f \|_{L^{\frac{2d}{d+2}}(U)} 
\leq \frac{2(d-1)}{d} |U|^{\frac{1}{d}} \| \nabla f\|_{L^2(U)} \quad \text{for all } f \in H^{1,2}_0(U).
\end{align*}
\end{proof}

\begin{lem} \label{basicapplem}
Let $\phi \in C^1((-\varepsilon, \infty))$ with $\varepsilon>0$ be such that $\phi(0)=0$ and $\phi' \in L^{\infty}((0, \infty))$. If $v \in H^{1,2}_0(U)$ with $v \geq 0$ in $U$, then $\phi(v) \in H^{1,2}_0(U)$ and $\nabla \phi(v) = \phi'(v) \nabla v$ in $U$.
\end{lem}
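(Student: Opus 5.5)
The plan is to reduce to the case where $\phi$ is globally Lipschitz on all of $\mathbb{R}$, and then apply the standard chain rule for Lipschitz $C^1$ functions in $H^{1,2}_0(U)$.

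\textbf{Step 1: the extension.} Since $v \geq 0$, only the values of $\phi$ on $[0,\infty)$ matter. Because $\phi'$ is continuous on $(-\varepsilon,\infty)$ and bounded on $(0,\infty)$, it is bounded on $[0,\infty)$. Define $\tilde\phi \in C^1(\mathbb{R})$ by
\[
\tilde\phi(s) = \phi(s) \ \text{for } s \geq 0, \qquad \tilde\phi(s) = \phi'(0)\, s \ \text{for } s<0 .
\]
This extension is $C^1$ with $\tilde\phi'(s) = \phi'(0)$ for $s<0$. It satisfies $\tilde\phi(0)=0$ and $\|\tilde\phi'\|_{L^\infty(\mathbb{R})} = \sup_{[0,\infty)}|\phi'| =: L < \infty$. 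Hence $\tilde\phi(v) = \phi(v)$ and $\tilde\phi'(v) = \phi'(v)$ a.e. in $U$.

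\textbf{Step 2: smooth approximation.} Choose $v_n \in C_0^\infty(U)$ with $v_n \to v$ in $H^{1,2}(U)$; passing to a subsequence, also $v_n \to v$ and $\nabla v_n \to \nabla v$ a.e. Each $\tilde\phi(v_n)$ lies in $C^1_0(U)$, since $\tilde\phi(0)=0$ makes its support compact. It therefore belongs to $H^{1,2}_0(U)$, with $\nabla \tilde\phi(v_n) = \tilde\phi'(v_n)\nabla v_n$ by the classical chain rule.

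\textbf{Step 3: convergence of the functions.} We have $|\tilde\phi(v_n)-\tilde\phi(v)| \leq L|v_n - v|$, so $\tilde\phi(v_n) \to \tilde\phi(v)$ in $L^2(U)$.

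\textbf{Step 4: convergence of the gradients.} Write
\[
\tilde\phi'(v_n)\nabla v_n - \tilde\phi'(v)\nabla v = \tilde\phi'(v_n)(\nabla v_n - \nabla v) + \bigl(\tilde\phi'(v_n)-\tilde\phi'(v)\bigr)\nabla v .
\]
The first term has $L^2$ norm at most $L\|\nabla v_n-\nabla v\|_{L^2(U)} \to 0$. For the second, continuity of $\tilde\phi'$ and a.e. convergence $v_n\to v$ give pointwise convergence to $0$ a.e. The term is dominated by $2L\|\nabla v\| \in L^2(U)$, so it tends to $0$ in $L^2$ by dominated convergence.

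\textbf{Step 5: conclusion.} The sequence $\tilde\phi(v_n)$ is Cauchy in $H^{1,2}_0(U)$, which is closed in $H^{1,2}(U)$. Its limit is $\tilde\phi(v) = \phi(v)$, with gradient $\tilde\phi'(v)\nabla v = \phi'(v)\nabla v$.

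The main point requiring care is Step 4, because $\tilde\phi'$ is merely continuous and not Lipschitz. This is exactly why a.e. convergence together with dominated convergence is needed in place of a direct estimate. The extension in Step 1 is what removes any concern that the approximants $v_n$ take negative values below $-\varepsilon$, where $\phi$ is not defined.
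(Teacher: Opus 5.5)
Your proof is correct and follows essentially the same route as the paper: extend $\phi$ to a $C^1$ function on $\mathbb{R}$ with bounded derivative, approximate $v$ by $v_n \in C_0^\infty(U)$ converging in $H^{1,2}$ and a.e., and control $\tilde\phi'(v_n)\nabla v_n - \tilde\phi'(v)\nabla v$ with the same two-term split (the Lipschitz bound for one term, dominated convergence for the other). The differences are minor. You derive $\nabla\phi(v)=\phi'(v)\nabla v$ from the closedness of $H^{1,2}_0(U)$ together with the $L^2$ convergence of $\tilde\phi(v_n)$, whereas the paper quotes the Evans--Gariepy chain rule (Theorem 4.4(ii)); and your explicit linear extension makes concrete the extension step that the paper leaves unspecified.
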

\begin{proof}
Extend $\phi$ to a function on $\mathbb{R}$, denoted again by $\phi$, such that $\phi \in C^1(\mathbb{R})$ with $\phi' \in L^{\infty}(\mathbb{R})$.
Let $(v_n)_{n \geq 1} \subset C_0^{\infty}(U)$ be a sequence of functions such that $\lim_{n \rightarrow \infty} v_n = v$ in $H^{1,2}_0(U)$ such that
\begin{equation} \label{h12conver}
\lim_{n \to \infty} \| \nabla v_n - \nabla v \|_{L^2(U)} = 0
\end{equation}
and
\begin{equation} \label{pointconver}
\lim_{n \to \infty} v_n(x) = v(x) \quad \text{for a.e. } x \in U.
\end{equation}
Then, by the chain rule, $(\phi(v_n))_{n \geq 1} \subset C^1_0(U)$ and
$$
\nabla \phi(v_n) = \phi'(v_n) \nabla v_n \quad \text{ in $U$ \quad for each $n \geq 1$}.
$$
Moreover, by \cite[Theorem 4.4(ii)]{EG15}, $\phi(v) \in H^{1,2}(U)$ satisfies $\nabla \phi(v) = \phi'(v) \nabla v$.
Thus, we have
\begin{align*}
&\| \nabla  \phi(v) -\nabla \phi(v_n)  \|_{L^2(U)} = \| \phi'(v) \nabla v - \phi'(v_n) \nabla v_n	\|_{L^2(U)}  \\
&\leq \| \phi'(v) \nabla v - \phi'(v_n) \nabla v \|_{L^2(U)} + \| \phi'(v_n) \nabla v - \phi'(v_n) \nabla v_n \|_{L^2(U)} \\
& \leq \| \phi'(v) \nabla v - \phi'(v_n) \nabla v \|_{L^2(U)}+\|\phi' \|_{L^{\infty}(\mathbb{R})} \| \nabla v- \nabla v_n \|_{L^2(U)}. 
\end{align*}
The first term converges to zero by the Lebesgue dominated convergence theorem and \eqref{pointconver}, and the second term converges to zero by \eqref{h12conver}. Therefore, we have $\phi(v) \in H^{1,2}_0(U)$.
\end{proof}
\centerline{}
The following weak maximum principle originates from N. S. Trudinger \cite{Tma}, and a reformulated version is given in \cite[Chapter 2]{BKRS15} under the assumption that $\mathbf{H} \in L^p(U, \mathbb{R}^d)$ for some $p \in (d, \infty)$. However, the original result in \cite{Tma} allows the critical case $\mathbf{H} \in L^d(U, \mathbb{R}^d)$. For the reader's convenience, we provide the precise statement and a detailed proof of this version below.
\begin{prop}{\bf (Weak maximum principle)} \label{weakmaxim}
Assume {\bf (T1)}. Let $\mathcal{B}: H^{1,2}_0(U) \times H^{1,2}_0(U) \to \mathbb{R}$ denote the bilinear form defined by \eqref{bilinearf}.  Let $u \in H^{1,2}_0(U)$ satisfy
\begin{equation} \label{negativecondiwe}
\mathcal{B}(u, \varphi) \leq 0 \quad \text{ for all $\varphi \in H^{1,2}_0(U)$ with $\varphi \geq 0$ in $U$}.
\end{equation}
Then, $u \leq 0$ in $U$.
\end{prop}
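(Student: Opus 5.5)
The plan is a Stampacchia/Trudinger-type logarithmic test-function argument. It uses only the Sobolev inequality \eqref{imptsobolmud} and Lemma \ref{basicapplem}, and needs nothing beyond $\mathbf{H} \in L^2(U,\mathbb{R}^d)$, which (T1) provides.

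\textbf{Step 1: the test function.} Let $w := u^+$. Since $u \in H^{1,2}_0(U)$, we have $w \in H^{1,2}_0(U)$ with $\nabla w = 1_{\{u>0\}} \nabla u$. Fix $\delta > 0$ and set $\phi_\delta(s) := \frac{s}{s+\delta}$. This is $C^1$ on $(-\delta,\infty)$, satisfies $\phi_\delta(0)=0$, and has $0 \le \phi_\delta'(s) = \frac{\delta}{(s+\delta)^2} \le \frac{1}{\delta}$ on $(0,\infty)$. By Lemma \ref{basicapplem}, $\varphi := \phi_\delta(w) \in H^{1,2}_0(U)$ with
\[
\nabla \varphi = \frac{\delta\, \nabla w}{(w+\delta)^2}.
\]
Moreover $\varphi \ge 0$, so it is admissible in \eqref{negativecondiwe}. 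Since $\nabla w$ vanishes off $\{u>0\}$, where $u = w$, this gives
\[
0 \ \ge\ \mathcal{B}(u,\varphi) = \delta \int_U \frac{\langle A\nabla w, \nabla w\rangle}{(w+\delta)^2}\,dx + \delta \int_U \frac{w}{w+\delta}\cdot\frac{\langle \mathbf{H}, \nabla w\rangle}{w+\delta}\,dx .
\]

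\textbf{Step 2: a $\delta$-uniform bound on a logarithm.} Set $G_\delta := \log(1 + w/\delta)$. Lemma \ref{basicapplem} applies again, since $\log(1+s/\delta)$ is $C^1$ on $(-\delta,\infty)$, vanishes at $0$, and has derivative bounded by $1/\delta$ on $(0,\infty)$. Hence $G_\delta \in H^{1,2}_0(U)$ and $\nabla G_\delta = \frac{\nabla w}{w+\delta}$.

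Divide Step 1 by $\delta$, use ellipticity \eqref{elliptici} and $0 \le \frac{w}{w+\delta} \le 1$. This yields
\[
\lambda \|\nabla G_\delta\|_{L^2(U)}^2 \le \int_U 1_{\{u>0\}} \|\mathbf{H}\|\,\|\nabla G_\delta\|\,dx \le \|\mathbf{H}1_{\{u>0\}}\|_{L^2(U)}\,\|\nabla G_\delta\|_{L^2(U)} .
\]
Therefore $\|\nabla G_\delta\|_{L^2(U)} \le \lambda^{-1}\|\mathbf{H}\|_{L^2(U)} =: C$. This constant is finite because $U$ is bounded and $\mathbf{H} \in L^d(U,\mathbb{R}^d)$, and it does not depend on $\delta$.

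\textbf{Step 3: conclusion.} By \eqref{imptsobolmud},
\[
\|G_\delta\|_{L^{\frac{2d}{d-2}}(U)} \le \frac{2(d-1)}{d-2}\,C \quad \text{for all } \delta>0.
\]
Fix $\varepsilon>0$. On $\{u \ge \varepsilon\}$ we have $G_\delta \ge \log(1+\varepsilon/\delta)$, so
\[
|\{u\ge \varepsilon\}|^{\frac{d-2}{2d}} \log(1+\varepsilon/\delta) \le \frac{2(d-1)}{d-2}\,C .
\]
Letting $\delta \to 0$ forces $|\{u \ge \varepsilon\}| = 0$. Since $\varepsilon>0$ is arbitrary, $u \le 0$ a.e. in $U$.

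The only delicate point is the justification in Steps 1 and 2 that $\phi_\delta(u^+)$ and $G_\delta$ lie in $H^{1,2}_0(U)$ with the stated gradients, together with the identity $u\,\nabla\varphi = w\,\nabla\varphi$. Lemma \ref{basicapplem}, applied to $v = u^+ \ge 0$, handles the former. The latter holds because $\nabla\varphi = 0$ a.e. on $\{u \le 0\}$. The key structural feature is that the drift acts on $u$ itself, in the form $u\mathbf{H}$. After division by $w+\delta$ this yields the bounded factor $\frac{w}{w+\delta}$, which is what makes the bound in Step 2 independent of $\delta$ without any smallness assumption on $\mathbf{H}$.
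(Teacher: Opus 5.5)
Your proof is correct and follows essentially the same route as the paper: you test with $u^+/(u^+ + \delta)$, derive the $\delta$-uniform bound $\|\nabla \log(1+u^+/\delta)\|_{L^2(U)} \le \lambda^{-1}\|\mathbf{H}\|_{L^2(U)}$, and let $\delta \to 0$. The only cosmetic difference is at the end, where you use the Sobolev inequality with a level-set (Chebyshev) bound on $\{u \ge \varepsilon\}$, whereas the paper uses its Poincaré-type inequality (Proposition \ref{mainpoincare}) together with Fatou's lemma.
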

\begin{proof}
Let $\phi \in C^1((-\varepsilon, \infty))$ with $\varepsilon>0$ be such that $\phi \geq 0$ on $[0, \infty)$, $\phi(0) = 0$, and $\phi' \in L^{\infty}((0, \infty))$.
By \cite[Theorem 4.4]{EG15} and Lemma \ref{basicapplem}, we have $u^+ \in H^{1,2}_0(U)$ and $\phi(u^+) \in H^{1,2}_0(U)$, with
\[
\nabla \phi(u^+) = \phi'(u^+) \nabla u^+ = \phi'(u^+) 1_{\{u>0\}} \nabla u \in L^2(U, \mathbb{R}^d).
\]
Substituting $\varphi = \phi(u^+)$ into \eqref{negativecondiwe}, we obtain
\[
\int_U \langle A \nabla u,\, \phi'(u^+) 1_{\{u>0\}} \nabla u \rangle \,dx 
+ \int_U \langle u \mathbf{H},\, \phi'(u^+) 1_{\{u>0\}} \nabla u \rangle \,dx \leq 0.
\]
Since $1_{\{u>0\}} = (1_{\{u>0\}})^2$ and $1_{\{u>0\}} \nabla u = \nabla u^+$, it follows that
\begin{equation} \label{underlyingineq}
\int_U \langle A \nabla u^+,\, \phi'(u^+) \nabla u^+ \rangle \,dx 
\leq  \int_U \langle- u^+ \mathbf{H},\, \phi'(u^+) \nabla u^+ \rangle \,dx.
\end{equation}
Given $\varepsilon > 0$, define $\phi_\varepsilon \in C^1((-\varepsilon, \infty))$ by
\[
\phi_\varepsilon(t) := \frac{t}{t + \varepsilon}, \quad t \in [0, \infty).
\]
Then, clearly $\phi_\varepsilon \geq 0$ on $[0, \infty)$, $\phi_\varepsilon(0) = 0$, and $\phi_\varepsilon' \in L^\infty((0, \infty))$, where
\[
\phi_\varepsilon'(t) = \frac{\varepsilon}{(t + \varepsilon)^2} \quad \text{for all } t \in [0, \infty).
\]
Thus, substituting $\phi = \phi_{\varepsilon}$ into \eqref{underlyingineq} yields
\begin{align} \label{finmaxineq}
\int_U \frac{1}{(u^+ + \varepsilon)^2} \langle A \nabla u^+,\, \nabla u^+ \rangle \,dx 
\leq \int_U \left\langle -u^+ \mathbf{H},\, \frac{1}{(u^+ + \varepsilon)^2} \nabla u^+ \right\rangle \,dx.
\end{align}
For each $\varepsilon > 0$, define $\psi_{\varepsilon} \in C^1((-\varepsilon, \infty))$ by
\[
\psi_{\varepsilon}(t) := \ln \left(1 + \frac{t}{\varepsilon} \right), \quad t \in [0, \infty).
\]
Then, $\psi_{\varepsilon}' \in L^{\infty}((0, \infty))$ satisfies
\[
\psi_{\varepsilon}'(t) = \frac{1}{t + \varepsilon} \quad \text{for all } t \in [0, \infty).
\]
Again, by Lemma~\ref{basicapplem}, $\psi_{\varepsilon}(u^+) \in H^{1,2}_0(U)$, and inequality~\eqref{finmaxineq} implies that
\begin{align*}
\lambda \| \nabla \psi_{\varepsilon}(u^+) \|^2_{L^2(U)}
&\leq \int_U \langle A \nabla \psi_{\varepsilon}(u^+),\, \nabla \psi_{\varepsilon}(u^+) \rangle \,dx \\
&\leq \int_U \left\langle \frac{-u^+}{u^+ + \varepsilon} \mathbf{H},\, \nabla \psi_{\varepsilon}(u^+) \right\rangle \,dx \\
&\leq \| \mathbf{H} \|_{L^2(U)} \| \nabla \psi_{\varepsilon}(u^+) \|_{L^2(U)}.
\end{align*}
Hence,
\[
\| \nabla \psi_{\varepsilon}(u^+) \|_{L^2(U)} \leq \frac{1}{\lambda} \| \mathbf{H} \|_{L^2(U)}.
\]
By Proposition~\ref{mainpoincare}, it follows that
\[
\| \psi_{\varepsilon}(u^+) \|^2_{L^2(U)} \leq \frac{4(d - 1)^2}{\lambda^2 d^2} |U|^{\frac{2}{d}} \| \mathbf{H} \|^2_{L^2(U)}.
\]
Now, suppose there exists a measurable subset $V \subset U$ with $|V| > 0$ such that $u^+(x) > 0$ for all $x \in V$. Then, by Fatou's lemma,
\[
\infty = \int_V \liminf_{\varepsilon \to 0^+} | \psi_{\varepsilon}(u^+) |^2 \,dx
\leq \liminf_{\varepsilon \to 0^+} \int_V | \psi_{\varepsilon}(u^+) |^2 \,dx
\leq \frac{4(d - 1)^2}{\lambda^2 d^2} |U|^{\frac{2}{d}} \| \mathbf{H} \|^2_{L^2(U)}<\infty,
\]
which is a contradiction. Therefore, $u^+ = 0$ a.e. in $U$, as desired.
\end{proof}

\begin{cor} \label{existeanduniqco}
Assume {\bf (T1)}. Let $\mathcal{B}: H^{1,2}_0(U) \times H^{1,2}_0(U) \to \mathbb{R}$ denote the bilinear form defined by \eqref{bilinearf}.
Let $g \in H^{-1,2}(U)$. Then, there exists a unique function $u_g \in H^{1,2}_0(U)$ such that
\begin{equation*} \label{varidenti}
\mathcal{B}(u_g, \varphi)  = \langle g, \varphi \rangle_{H^{-1,2}(U)} \quad \text{ for all $\varphi \in H^{1,2}_0(U)$}.
\end{equation*}
\end{cor}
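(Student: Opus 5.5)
The plan is to combine Proposition~\ref{laxfredexist} with the weak maximum principle, Proposition~\ref{weakmaxim}. Proposition~\ref{laxfredexist} gives existence and uniqueness for every $g \in H^{-1,2}(U)$. It requires one hypothesis: if $w \in H^{1,2}_0(U)$ satisfies $\mathcal{B}(w,\varphi)=0$ for all $\varphi \in H^{1,2}_0(U)$, then $w=0$. So the whole task reduces to proving this injectivity property under {\bf (T1)}.

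To prove it, take such a $w$.

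\emph{Upper bound.} Since $\mathcal{B}(w,\varphi)=0\le 0$ for every $\varphi\in H^{1,2}_0(U)$, in particular for every $\varphi\ge 0$, condition \eqref{negativecondiwe} holds with $u=w$. Proposition~\ref{weakmaxim} then gives $w\le 0$ in $U$.

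\emph{Lower bound.} The bilinear form is linear in its first argument, so $\mathcal{B}(-w,\varphi)=-\mathcal{B}(w,\varphi)=0$ for all $\varphi$. Applying Proposition~\ref{weakmaxim} to $-w$ gives $-w\le 0$.

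Hence $w=0$ a.e. in $U$, and the hypothesis of Proposition~\ref{laxfredexist} is verified.

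With this in hand, Proposition~\ref{laxfredexist} yields, for each $g\in H^{-1,2}(U)$, a unique $u_g\in H^{1,2}_0(U)$ with $\mathcal{B}(u_g,\varphi)=\langle g,\varphi\rangle_{H^{-1,2}(U)}$ for all $\varphi\in H^{1,2}_0(U)$. Uniqueness can also be seen directly: the difference of two solutions satisfies the homogeneous identity and is therefore zero by the argument above.

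I expect no real obstacle. The analytic work was already done in Proposition~\ref{weakmaxim}, which handles the critical $L^d$ drift through the logarithmic test functions and needs only $\mathbf{H}\in L^2(U,\mathbb{R}^d)$; this is guaranteed because $U$ is bounded and $d\ge 3$. The one point to check is that Proposition~\ref{weakmaxim} is stated for test functions in all of $H^{1,2}_0(U)$ with $\varphi \geq 0$, which is exactly the class to which the homogeneous identity applies, so the two steps fit together without further approximation.
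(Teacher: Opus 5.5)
Your proposal is correct and is essentially the paper's proof. It applies Proposition~\ref{weakmaxim} to $w$ and to $-w$ to obtain $w=0$, and then invokes Proposition~\ref{laxfredexist} for existence and uniqueness of $u_g$.
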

\begin{proof}
Let $w \in H^{1,2}_0(U)$ satisfy
$$
\mathcal{B}(w, \varphi)=0 \quad \text{ for all $\varphi \in H^{1,2}_0(U)$}.
$$
Then, by Proposition \ref{weakmaxim}, $w \leq 0$ and $-w \leq 0$ in $U$, and hence $w=0$ in $U$. Thus, the assertion follows from Proposition \ref{laxfredexist}.
\end{proof}
\centerline{}
The following lemma provides a standard energy estimate, but due to the assumption that $\mathbf{H} \in L^d(U, \mathbb{R}^d)$, a delicate use of partition of unity and compactness arguments is required.
In contrast, if one assumes $\mathbf{H} \in L^p(U, \mathbb{R}^d)$ for some $p > d$, the estimate could likely be derived more easily using an interpolation inequality. We refer to \cite{Sha} for the derivation of the $H^{1,q}$-estimate under appropriate regularity assumptions on the coefficient matrix $A$ and the domain $U$.
\begin{lem}\label{lempactprelim}
Assume {\bf (T1)}. For each $n \geq 1$, let $A_n = (a_{n, ij})_{1 \leq i,j \leq d}$ be a matrix of measurable functions on $\mathbb{R}^d$, satisfying 
\begin{equation} \label{anuniformel}
\max_{1 \leq i,j \leq d} |a_{n, ij}(x)| \leq M, \quad \langle A_n(x) \xi, \xi \rangle \geq \lambda \| \xi \|^2 \quad \text{for a.e. } x \in \mathbb{R}^d \text{ and all } \xi \in \mathbb{R}^d.
\end{equation}
Assume also that $\lim_{n \rightarrow \infty} a_{n, ij} = a_{ij}$ in $L^2(U)$ for all $1 \leq i, j \leq d$. Let $\eta$ be a standard mollifier on $\mathbb{R}^d$, and for each $n \geq 1$, define $\eta_n \in C_0^{\infty}(B_{1/n})$ given by $\eta_{n}(x):=n^d \eta(nx)$, $x \in \mathbb{R}^d$.
Define
\[
\mathbf{H}_n := \mathbf{H} * \eta_n, \quad n \geq 1,
\]
where $\mathbf{H}$ is the zero extension of $\mathbf{H} \in L^d(U, \mathbb{R}^d)$ to $\mathbb{R}^d$.  Given $g \in H^{-1,2}(U)$, let $u_{n, g} \in H_0^{1,2}(U)$ be the unique function satisfying
\begin{equation} \label{varidenapr}
\int_U \langle A_n \nabla u_{n,g} + u_{n, g}\,\mathbf{H}_n, \nabla \varphi \rangle \, dx  = \langle g, \varphi \rangle_{H^{-1,2}(U)} \quad \text{for every } \varphi \in H^{1,2}_0(U),
\end{equation}
as in Corollary~\ref{existeanduniqco}. Then, there exist constants $c_1, c_2>0$ which only depend on $d$, $\lambda$, $M$, $\mathbf{H}$ and $U$ ($c_1, c_2>0$ are independent of $n$ and $g$)
such that
\begin{equation*}
\| \nabla u_{n, g} \|_{L^2(U)} \leq c_1 \| u_{n,g} \|_{L^2(U)} +c_2 \|g\|_{H^{-1,2}(U)}.
\end{equation*}
\end{lem}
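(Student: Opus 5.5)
The plan is to test \eqref{varidenapr} with $\varphi = u_{n,g}$ itself and to control the drift term uniformly in $n$ by splitting $\mathbf{H}$ \emph{before} mollification. This avoids having to find a truncation level for each $\mathbf{H}_n$ separately. Write $u := u_{n,g}$. Testing with $u$ and using the ellipticity in \eqref{anuniformel} gives
\[
\lambda \|\nabla u\|_{L^2(U)}^2 \leq \int_U \|\mathbf{H}_n\|\,|u|\,\|\nabla u\|\,dx + \|g\|_{H^{-1,2}(U)}\|\nabla u\|_{L^2(U)} .
\]
Only the first term on the right needs work. The convergence $a_{n,ij}\to a_{ij}$ should not be needed; only the uniform bounds \eqref{anuniformel} enter.

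The key step is a decomposition that is uniform in $n$. As in the proof of Proposition~\ref{bilineaprop}(ii), I choose $N \geq 0$ (depending only on $\mathbf{H}$, $d$, $\lambda$) such that
\[
\|\mathbf{H}1_{\{\|\mathbf{H}\|\geq N\}}\|_{L^d(\mathbb{R}^d)} \leq \frac{\lambda}{4}\cdot\frac{d-2}{2(d-1)},
\]
which is possible by dominated convergence. I then write
\[
\mathbf{H}_n = \big(\mathbf{H}1_{\{\|\mathbf{H}\|\geq N\}}\big)*\eta_n + \big(\mathbf{H}1_{\{\|\mathbf{H}\|< N\}}\big)*\eta_n =: \mathbf{F}_n + \mathbf{G}_n .
\]
Since $\eta_n \geq 0$ and $\int\eta_n = 1$, Young's convolution inequality gives $\|\mathbf{F}_n\|_{L^d} \leq \|\mathbf{H}1_{\{\|\mathbf{H}\|\geq N\}}\|_{L^d}$ and $\|\mathbf{G}_n\|_{L^\infty} \leq N$, for every $n$. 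This is exactly where uniformity in $n$ is obtained. It replaces the partition-of-unity/compactness argument the authors allude to, and I expect it to be the only delicate point: one must make sure the small-large splitting survives mollification, and splitting $\mathbf{H}$ rather than $\mathbf{H}_n$ is what makes it work.

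With this splitting, I apply Hölder's inequality and the Sobolev inequality \eqref{imptsobolmud}:
\[
\int_U \|\mathbf{F}_n\|\,|u|\,\|\nabla u\|\,dx \leq \|\mathbf{F}_n\|_{L^d}\,\|u\|_{L^{\frac{2d}{d-2}}}\,\|\nabla u\|_{L^2} \leq \frac{\lambda}{2}\|\nabla u\|_{L^2}^2,
\]
and
\[
\int_U \|\mathbf{G}_n\|\,|u|\,\|\nabla u\|\,dx \leq N\|u\|_{L^2}\|\nabla u\|_{L^2}.
\]
Absorbing the first bound into the left-hand side yields
\[
\frac{\lambda}{2}\|\nabla u\|_{L^2}^2 \leq \big(N\|u\|_{L^2} + \|g\|_{H^{-1,2}(U)}\big)\|\nabla u\|_{L^2}.
\]
Dividing by $\|\nabla u\|_{L^2}$, where the case $\nabla u = 0$ is trivial, gives the claim with $c_1 = 2N/\lambda$ and $c_2 = 2/\lambda$. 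Both constants depend only on $d$, $\lambda$ and $\mathbf{H}$, and in particular are independent of $n$ and $g$.
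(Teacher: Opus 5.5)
Your proof is correct, and it takes a genuinely different and shorter route than the paper.

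\textbf{The paper's route.} The paper localizes. For each $x\in\overline{U}$ it chooses $r_x$ with $\frac{2(d-1)}{d-2}\|\mathbf{H}\|_{L^d(B_{2r_x}(x))}\le\frac{\lambda}{4}$. It then tests \eqref{varidenapr} with $\zeta^2 u_{n,g}$ for $\zeta\in C_0^\infty(B_{r_x}(x))$ and absorbs the drift term using the local smallness of $\mathbf{H}_n$ from \eqref{ldestimat}. Finally it sums the local bounds over a finite partition of unity. Its constants therefore depend on $M$ (through the cross terms produced by $\nabla\zeta$) and on the chosen cover.

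\textbf{Your route.} You test globally with $u_{n,g}$ and carry the truncation argument of Proposition~\ref{bilineaprop}(ii) over to $\mathbf{H}_n$. The key observation is that splitting $\mathbf{H}$ into $\mathbf{H}1_{\{\|\mathbf{H}\|\ge N\}}$ and $\mathbf{H}1_{\{\|\mathbf{H}\|<N\}}$ \emph{before} mollifying survives convolution with a nonnegative kernel of unit mass. So the $L^d$-smallness of the first piece and the $L^\infty$-bound of the second hold for all $n$ at once. This gives explicit constants $c_1=2N/\lambda$ and $c_2=2/\lambda$, independent of $M$ and $U$. If $N=0$, replace $c_1$ by any positive number.

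\textbf{What each buys.} Your route also avoids a point the localization must handle with care. The mollifier bound \eqref{ldestimat} with $V=B_{r_x}(x)$, $W=B_{2r_x}(x)$ is only justified when $1/n<r_x$. So the paper's argument as written controls only $n$ beyond a threshold fixed by the cover. Your estimate is uniform in every $n\ge 1$, which is what Case~1 in the proof of Lemma~\ref{lempactness} actually uses. The localized argument is the more flexible tool when no global test function is available, for instance for interior Caccioppoli-type bounds or drifts only in $L^d_{\mathrm{loc}}$. For this lemma, however, your approach is cleaner.
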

\begin{proof}
First, note that for any open sets $V$, $W$ with $\overline{V} \subset W$, 
\begin{equation} \label{ldestimat}
\| \mathbf{H}_n\|_{L^d(V)} \leq \| \mathbf{H}\|_{L^d(W)}
\end{equation}
(see the proof of \cite[Theorem 7, Appendices]{E10}).
Let $x \in \overline{U}$ and $r_x>0$ be such that
\begin{equation} \label{ldestimat2}
 \frac{2(d-1)}{d-2} 
 \| \mathbf{H}\|_{L^d(B_{2r_x}(x))} \leq \frac{\lambda}{4}.
\end{equation}
Let $\zeta \in C_0^{\infty}(B_{r_x}(x))$. Given $g \in H^{-1,2}(U)$, substituting $\varphi = \zeta^2 u_{n,g} \in H^{1,2}_0(U)$ in \eqref{varidenapr} and using \eqref{imptsobolmud}, we have
\begin{align*}
&\lambda \| \zeta \nabla u_{n,g} \|_{L^2(U)}^2 \leq \int_U \langle A_n \nabla u_{n,g},  \zeta^2 \nabla u_{n,g} \rangle \, dx  \\
&= -\int_{U} \langle A_n \nabla u_{n,g}, 2 u_{n,g}\zeta \nabla \zeta \rangle\, dx
-\int_{U} \langle u_{n,g}\mathbf{H}_n, 2\zeta u_{n,g} \nabla \zeta \rangle \,dx \\
&\quad \qquad - \int_{U} \langle u_{n,g}\mathbf{H}_n,  \zeta^2 \nabla u_{n,g}\rangle \,dx + \langle g, \zeta^2 u_{n,g} \rangle_{H^{-1,2}(U)}  \\
&\leq 2dM \|\zeta \nabla u_{n,g} \|_{L^2(U)} \| u_{n,g} \nabla \zeta\|_{L^2(U)} + 2\| \mathbf{H}_n \|_{L^d(B_{r_x}(x))} \| \zeta u_{n,g} \|_{L^{\frac{2d}{d-2}}(U)} \|  u_{n,g} \nabla \zeta \|_{L^2(U)} \\
& \qquad + \| \mathbf{H}_n\|_{L^d(B_{r_x}(x))}
\| \zeta u_{n,g} \|_{L^{\frac{2d}{d-2}}(U)} \| \zeta \nabla u_{n,g} \|_{L^2(U)} + \|g\|_{H^{-1,2}(U)} \left( \|\zeta^2 \nabla u_{n,g} \|_{L^2(U)}		
+ \| 2\zeta u_{n,g} \nabla \zeta \|_{L^2(U)} \right) \\
&\leq   2dM \|\zeta \nabla u_{n,g} \|_{L^2(U)} \| u_{n,g} \nabla \zeta\|_{L^2(U)} + \frac{4(d-1)}{d-2}\| \mathbf{H}_n \|_{L^d(B_{r_x}(x))} \| \nabla (\zeta u_{n,g}) \|_{L^2(U)} \|  u_{n,g} \nabla \zeta \|_{L^2(U)} \\
&   \quad \qquad + \frac{2(d-1)}{d-2}
\| \mathbf{H}_n\|_{L^d(B_{r_x}(x))}
\| \nabla (\zeta u_{n,g}) \|_{L^{2}(U)} \| \zeta \nabla u_{n,g} \|_{L^2(U)} \\
& \quad \qquad  + \|g\|_{H^{-1,2}(U)} \|\zeta \|_{L^{\infty}(U)}  \|\zeta \nabla u_{n,g} \|_{L^2(U)}		
+ \|g\|_{H^{-1,2}(U)} \|2 \zeta\|_{L^{\infty}(U)}
\|  u_{n,g} \nabla \zeta \|_{L^2(U)}.
\end{align*}
Using Young's inequality, we obtain that
\begin{align}
&\frac{\lambda}{2} \| \zeta \nabla u_{n,g} \|_{L^2(U)}^2 \nonumber \\
&\leq \left( \frac{8d^2M^2}{\lambda}  + \frac{40(d-1)^2}{\lambda(d-2)^2} \| \mathbf{H}_n\|^2_{L^d(B_{r_x}(x))}+  \frac{4(d-1)}{d-2} \|\mathbf{H}_n\|_{L^d(B_{r_x}(x))}
 +2 \| \zeta \|^2_{L^{\infty}(U)}  \right)
\| u_{n,g} \nabla \zeta  \|^2_{L^2(U)}  \nonumber \\
& \; \;+ \left( \frac{2}{\lambda} \|\zeta\|_{L^{\infty}(U)}^2 + \frac12	\right) \| g \|^2_{H^{-1,2}(U)} + \frac{2(d-1)}{d-2} \| \mathbf{H}_n \|_{L^d(B_{r_x}(x))} \| \zeta \nabla u_{n,g}  \|_{L^2(U)}^2.  \label{underl2estimin}
\end{align}
Applying \eqref{ldestimat} and \eqref{ldestimat2} to \eqref{underl2estimin},
\begin{align*}
&\| \zeta \nabla u_{n,g} \|_{L^2(U)}^2 \\
&\leq  \frac{4}{\lambda}  \left( \frac{8d^2M^2}{\lambda}  + \frac{40(d-1)^2}{\lambda(d-2)^2} \| \mathbf{H}\|^2_{L^d(U)}+  \frac{4(d-1)}{d-2} \|\mathbf{H}\|_{L^d(U)}
 +2 \| \zeta \|^2_{L^{\infty}(U)}  \right) \| \nabla \zeta \|^2_{L^{\infty}(U)}  \| u_{n,g} \|^2_{L^2(U)}  \\
& \qquad \qquad  +\frac{4}{\lambda} \left( \frac{2}{\lambda} + \frac12	\right) \| g \|^2_{H^{-1,2}(U)}.
\end{align*}
Since $\overline{U}$ is compact and 
$\{ B_{r_x}(x) :  x \in \overline{U}\}$
is an open cover of $\overline{U}$, there exists $x_1, \ldots, x_N \in \overline{U}$ such that
$$
\overline{U} \subset \bigcup_{i=1}^N B_{r_{x_i}}(x_i).
$$
Let $(\zeta_i)_{i=1}^N$ be the smooth partition of unity with $\text{supp}(\zeta_i) \subset B_{r_{x_i}}(x_i)$ such that
$$
\sum_{i=1}^N \zeta_i =1 \quad \text{ on $\overline{U}$}.
$$
 Therefore,
 \begin{align*}
\| \nabla u_{n,g} \|_{L^2(U)} &=  \left \| \sum_{i=1}^N\zeta_i \nabla u_{n,g} \right \|_{L^2(U)}  \leq \sum_{i=1}^N \left\| \zeta_i \nabla u_{n,g} \right \|_{L^2(U)} \\
&\leq c_1 \| u_{n,g} \|_{L^2(U)} +c_2 \|g\|_{H^{-1,2}(U)},
 \end{align*}
 where 
 \begin{align*}
c_1&=\sum_{i=1}^N\frac{2}{\sqrt{\lambda}}\left( \frac{8d^2M^2}{\lambda}  + \frac{40(d-1)^2}{\lambda(d-2)^2} \| \mathbf{H}\|^2_{L^d(U)}+  \frac{4(d-1)}{d-2} \|\mathbf{H}\|_{L^d(U)}
 +2 \| \zeta_i \|^2_{L^{\infty}(U)}  \right)^{\frac12} \| \nabla \zeta_i \|_{L^{\infty}(U)} 
 \end{align*}
 and
$$ 
 c_2=\frac{2N}{\sqrt{\lambda}} \left( \frac{2}{\lambda} + \frac12	\right)^{\frac12}.
$$
\end{proof}
\centerline{}
The following lemma is inspired by the compactness arguments in \cite[Section 6.2, Theorem 6]{E10}, and its key feature is that the constant $C > 0$ remains independent of both the index $n$ and the external data $g \in H^{-1,2}(U)$, even though the coefficients are given as a sequence rather than a single function. Different from \cite[Lemma 3.3]{LT21}, the main feature here is that uniform estimates are obtained for the mollifications of $\mathbf{H}$, assuming $\mathbf{H} \in L^d(U, \mathbb{R}^d)$.
\begin{lem}\label{lempactness}
Assume {\bf (T1)}. For each $n \geq 1$, let $A_n = (a_{n, ij})_{1 \leq i,j \leq d}$ be a matrix of measurable functions on $\mathbb{R}^d$ satisfying 
\eqref{anuniformel}. Assume also that $\lim_{n \rightarrow \infty} a_{n, ij} = a_{ij}$ in $L^2(U)$ for all $1 \leq i, j \leq d$. Let $\eta$ be a standard mollifier on $\mathbb{R}^d$, and for each $n \geq 1$, define $\eta_n \in C_0^{\infty}(B_{1/n})$ given by $\eta_{n}(x):=n^d \eta(nx)$, $x \in \mathbb{R}^d$.  Define
\[
\mathbf{H}_n := \mathbf{H} * \eta_n,
\]
where $\mathbf{H}$ is the zero extension of $\mathbf{H} \in L^d(U, \mathbb{R}^d)$ to $\mathbb{R}^d$.  Given $g \in H^{-1,2}(U)$, let $u_{n, g} \in H_0^{1,2}(U)$ be the unique function satisfying
\[
\int_U \langle A_n \nabla u_{n,g} + u_{n, g}\,\mathbf{H}_n, \nabla \varphi \rangle \, dx  = \langle g, \varphi \rangle_{H^{-1,2}(U)} \quad \text{for every } \varphi \in H^{1,2}_0(U),
\]
as in Corollary~\ref{existeanduniqco}. Then, the following statements hold:
\begin{itemize}
\item[(i)]
There exists a constant $C>0$ independent of $n \geq 1$ and $g \in H^{-1,2}(U)$ such that
\begin{equation} \label{uniformestimacom}
\| u_{n, g} \|_{L^{2}(U)} \leq C \| g \|_{H^{-1,2}(U)} \quad \text{for all } n \geq 1 \text{ and } g \in H^{-1,2}(U).
\end{equation}
Moreover, 
\begin{equation} \label{appenergestim}
\| \nabla u_{n, g} \|_{L^2(U)} \leq  (c_1 C+c_2) \|g\|_{H^{-1,2}(U)},
\end{equation}
where $c_1, c_2>0$ are constants as in Lemma \ref{lempactprelim}.
\item[(ii)]
Given $g \in H^{-1,2}(U)$, let $u_g \in H^{1,2}_0(U)$ be the unique function satisfying
\[
\int_U \langle A \nabla u_{g} + u_{g}\,\mathbf{H}, \nabla \varphi \rangle \, dx  = \langle g, \varphi \rangle_{H^{-1,2}(U)} \quad \text{for every } \varphi \in H^{1,2}_0(U),
\]
as in Corollary~\ref{existeanduniqco}.
Then, there exists a subsequence of $(u_{n,g})_{n \geq 1}$, say again $(u_{n,g})_{n \geq 1}$ such that
\begin{equation} \label{weakconl2con}
\lim_{n \rightarrow \infty} u_{n,g} = u_g \quad \text{ weakly in $H^{1,2}_0(U)$} \quad \text{ and } \quad \lim_{n \rightarrow \infty} u_{n,g} = u_g \quad \text{ in $L^{2}(U)$}.
\end{equation}
In particular, 
\begin{equation*}
\| u_{g} \|_{L^{2}(U)} \leq C \| g \|_{H^{-1,2}(U)} \quad \text{for all } n \geq 1 \text{ and } g \in H^{-1,2}(U).
\end{equation*}
and
$$
\| \nabla u_{g} \|_{L^2(U)} \leq  (c_1 C+c_2) \|g\|_{H^{-1,2}(U)},
$$
where $C>0$ is the constant as in (i) and  $c_1, c_2>0$ are constants as in Lemma \ref{lempactprelim}.
\end{itemize}
\end{lem}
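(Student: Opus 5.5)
Part (i) will be proved by contradiction, following the compactness argument of \cite[Section 6.2, Theorem 6]{E10}. Since $g \mapsto u_{n,g}$ is linear, if \eqref{uniformestimacom} fails then for each $k$ there exist $n_k \geq 1$ and $g_k \in H^{-1,2}(U)$ with $\|u_{n_k,g_k}\|_{L^2(U)} > k\|g_k\|_{H^{-1,2}(U)}$. Normalize by setting $w_k := u_{n_k,g_k}/\|u_{n_k,g_k}\|_{L^2(U)}$ and $\tilde g_k := g_k/\|u_{n_k,g_k}\|_{L^2(U)}$. Then $\|w_k\|_{L^2(U)}=1$, $\|\tilde g_k\|_{H^{-1,2}(U)}<1/k$, and $w_k = u_{n_k,\tilde g_k}$. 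By Lemma \ref{lempactprelim}, whose constants do not depend on $n$ or $g$, we get $\|\nabla w_k\|_{L^2(U)} \leq c_1 + c_2$. By weak compactness and Rellich--Kondrachov, a subsequence satisfies $w_k \to w$ weakly in $H^{1,2}_0(U)$ and strongly in $L^2(U)$, so $\|w\|_{L^2(U)}=1$.

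The next step is to pass to the limit in
\[
\int_U \langle A_{n_k}\nabla w_k + w_k \mathbf{H}_{n_k}, \nabla\varphi\rangle\,dx = \langle \tilde g_k,\varphi\rangle_{H^{-1,2}(U)}.
\]
We split into two cases. If $(n_k)$ is bounded, then along a subsequence $n_k = m$ is constant. The limit equation then has coefficients $A_m$ and $\mathbf{H}_m$, which still satisfy \eqref{anuniformel} and {\bf (T1)}, so Corollary \ref{existeanduniqco} forces $w=0$. Otherwise we may assume $n_k\to\infty$.
\begin{itemize}
\item For the diffusion term, take $\varphi\in C_0^\infty(U)$. Then $A_{n_k}^T\nabla\varphi \to A^T\nabla\varphi$ strongly in $L^2(U)$: a subsequence converges a.e., the entries are uniformly bounded by $M$, and dominated convergence applies. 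Combined with the weak convergence of $\nabla w_k$, this gives convergence of $\int_U\langle A_{n_k}\nabla w_k,\nabla\varphi\rangle\,dx$.
\item For the drift term, $\mathbf{H}_{n_k}\to\mathbf{H}$ in $L^d$. Also $w_k$ is bounded in $L^{2d/(d-2)}$ and converges in $L^2$, hence converges in $L^s$ for every $s<2d/(d-2)$. With $\nabla\varphi\in L^\infty$, the product $w_k\mathbf{H}_{n_k}$ converges in $L^1$ (for instance with exponents $s=d/(d-1)$ and $d$), so $\int_U\langle w_k\mathbf{H}_{n_k},\nabla\varphi\rangle\,dx\to\int_U\langle w\mathbf{H},\nabla\varphi\rangle\,dx$.
\item The right-hand side tends to $0$.
\end{itemize}
Density of $C_0^\infty(U)$ in $H^{1,2}_0(U)$, together with \eqref{desiredestim1}, then gives $\mathcal B(w,\varphi)=0$ for all $\varphi\in H^{1,2}_0(U)$. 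Corollary \ref{existeanduniqco} (via the weak maximum principle, Proposition \ref{weakmaxim}) yields $w=0$, contradicting $\|w\|_{L^2(U)}=1$. Estimate \eqref{appenergestim} then follows by inserting \eqref{uniformestimacom} into Lemma \ref{lempactprelim}.

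For (ii), fix $g$. By (i), $(u_{n,g})$ is bounded in $H^{1,2}_0(U)$, so a subsequence converges weakly in $H^{1,2}_0(U)$ and strongly in $L^2(U)$ to some $v$. The same limit passage as above, now with $n\to\infty$ and $g$ fixed, shows $\mathcal B(v,\varphi)=\langle g,\varphi\rangle_{H^{-1,2}(U)}$ for all $\varphi$. Uniqueness in Corollary \ref{existeanduniqco} gives $v=u_g$, which is \eqref{weakconl2con}. The bounds on $u_g$ follow from weak lower semicontinuity of the norms together with strong $L^2$ convergence.

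The main obstacle is the limit passage in the drift term at the critical integrability $\mathbf{H}\in L^d$. The $L^2$ compactness alone is not enough, so we will interpolate between $L^2$ convergence and the uniform $L^{2d/(d-2)}$ bound to get convergence in some $L^s$ with $s<2d/(d-2)$. It is also essential that Lemma \ref{lempactprelim} gives a gradient bound uniform in $n$, since that is what makes the normalized sequence precompact.
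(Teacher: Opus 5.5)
Your proposal is correct and follows essentially the same route as the paper: normalization via linearity and uniqueness, the uniform gradient bound from Lemma \ref{lempactprelim}, the split into bounded versus unbounded $(n_k)$ with Corollary \ref{existeanduniqco} forcing the limit to vanish, and the same compactness-plus-uniqueness argument for (ii). In (ii) you make explicit the identification of the limit with $u_g$, which the paper leaves implicit. One small correction to your closing remark: the interpolation step is not needed, since for $\varphi \in C_0^\infty(U)$ one has $\langle \mathbf{H}_{n_k}, \nabla\varphi\rangle \to \langle \mathbf{H}, \nabla\varphi\rangle$ in $L^d(U) \subset L^2(U)$ (because $d \geq 3$ and $U$ is bounded), so strong $L^2$ convergence of $w_k$ already suffices, consistent with your own choice $s = d/(d-1) < 2$.
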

\begin{proof}
(i)
Suppose, by contradiction, that \eqref{uniformestimacom} does not hold. Then, for each $k \in \mathbb{N}$, there exist $\tilde{g}_k \in H^{-1,2}(U)$ and $n_k \in \mathbb{N}$ such that
\[
\| u_{n_k, \tilde{g}_k} \|_{L^{2}(U)} > k \| \tilde{g}_k \|_{H^{-1,2}(U)}.
\]
Define 
\[
g_k := \frac{\tilde{g}_k}{\| u_{n_k, \tilde{g}_k} \|_{L^{2}(U)}} \in H^{-1,2}(U).
\]
By Corollary~\ref{existeanduniqco}, it follows that
\[
u_{n_k, g_k} = \frac{u_{n_k, \tilde{g}_k}}{\| u_{n_k, \tilde{g}_k} \|_{L^{2}(U)}}.
\]
Therefore,
\begin{equation} \label{unkgk1}
\| u_{n_k, g_k} \|_{L^{2}(U)} = 1
\end{equation}
and
\begin{equation} \label{gkestimate}
\| g_k \|_{H^{-1,2}(U)} < \frac{1}{k}.
\end{equation}
Meanwhile, we have
\[
\int_U \langle A_{n_k} \nabla u_{n_k,g_k} + u_{n_k, g_k}\,\mathbf{H}_{n_k}, \nabla \varphi \rangle\, dx = \langle g_k, \varphi \rangle_{H^{-1,2}(U)} \quad \text{for every } \varphi \in H^{1,2}_0(U).
\]
Using Lemma \ref{lempactprelim}, \eqref{unkgk1} and \eqref{gkestimate},  it follows that
\begin{align}
\| \nabla u_{n_k,g_k} \|_{L^2(U)} &\leq  c_1 \| u_{n,g} \|_{L^2(U)} +c_2 \|g\|_{H^{-1,2}(U)}  \nonumber \\
&\leq c_1 +c_2, \label{compestima}
\end{align}
where $c_1, c_2>0$ are constants which only depend on $d$, $\lambda$, $M$, $\mathbf{H}$ and $U$ ($c_1, c_2$ are independent of $n$ and $g$).\\
{\underline{\sf Case 1})} Suppose that the set $\{ n_k : k \geq 1 \}$ is bounded. Then, there exists $N \in \mathbb{N}$ and a subsequence $(k_j)_{j \geq 1} \subset (k)_{k \geq 1}$ such that $n_{k_j} = N$ for all $j \geq 1$. In this case, from \eqref{compestima}, we deduce that
\[
\| \nabla u_{N, g_{k_j}} \|_{L^2(U)} \leq c_1+c_2 \quad \text{ for all $j \geq 1$.}
\]
Moreover,
\begin{equation} \label{divformgk}
\int_U \langle A_N \nabla u_{N, g_{k_j}} + u_{N, g_{k_j}}\,\mathbf{H}_N, \nabla \varphi \rangle\, dx = \langle g_{k_j}, \varphi \rangle_{H^{-1,2}(U)} \quad \text{for all } \varphi \in H^{1,2}_0(U) \; \text{ and $j \geq 1$}.
\end{equation}
By the weak compactness of bounded sets in $H_0^{1,2}(U)$ and the Rellich-Kondrachov compactness theorem, there exists a subsequence of $(u_{N, g_{k_j}})_{j \geq 1}$, which we denote again by $(u_{N, g_{k_j}})_{j \geq 1}$, and a function $u \in H_0^{1,2}(U)$ such that
\begin{equation} \label{weakcomus}
\lim_{j \rightarrow \infty} u_{N, g_{k_j}} =u \quad \text{weakly in } H_0^{1,2}(U), \qquad  \lim_{j \rightarrow \infty} u_{N, g_{k_j}} =u \quad \text{ in } L^{2}(U).
\end{equation}
Passing to the limit in \eqref{divformgk} along this subsequence and using the fact that $g_{k_j} \to 0$ in $H^{-1,2}(U)$ as $j \to \infty$ (see \eqref{gkestimate}), we obtain
\begin{equation*} \label{divformgklim}
\int_U \langle A_{N} \nabla u + u\,\mathbf{H}_{N}, \nabla \varphi \rangle \, dx = 0 \quad \text{for all } \varphi \in H^{1,2}_0(U).
\end{equation*}
By the uniqueness in Corollary~\ref{existeanduniqco}, it follows that $u = 0$ in $U$. On the other hand, by \eqref{unkgk1} and \eqref{weakcomus}, 
\[
1 =\lim_{j \rightarrow \infty} \|  u_{N, g_{k_j}} \|_{L^2(U)} = \| u \|_{L^2(U)}=0,
\]
which is a contradiction. \\
{\underline{\sf Case 2})} Suppose now that the set $\{ n_k : k \geq 1 \}$ is unbounded. Then, there exists a subsequence $(k_j)_{j \geq 1} \subset (k)_{k \geq 1}$ such that
\[
\lim_{j \to \infty} n_{k_j} = \infty
\]
and
\begin{equation} \label{divformgkcase2}
\int_U \langle A_{n_{k_j}} \nabla u_{n_{k_j}, g_{k_j}} + u_{n_{k_j}, g_{k_j}}\,\mathbf{H}_{n_{k_j}}, \nabla \varphi \rangle\, dx = \langle g_{k_j}, \varphi \rangle_{H^{-1,2}(U)} \quad \text{for all } \varphi \in H^{1,2}_0(U).
\end{equation}
By \eqref{compestima}, we have
\[
\| \nabla u_{n_{k_j}, g_{k_j}} \|_{L^2(U)} \leq c_1+c_2.
\]
Consequently, there exists a subsequence of $(u_{n_{k_j}, g_{k_j}})_{j \geq 1}$, say again $(u_{n_{k_j}, g_{k_j}})_{j \geq 1}$, and a function $u \in H_0^{1,2}(U)$ such that
\begin{equation} \label{weakcomus2}
\lim_{j \rightarrow \infty}u_{n_{k_j}, g_{k_j}} = u \quad \text{weakly in } H_0^{1,2}(U), \qquad \lim_{j \rightarrow \infty}u_{n_{k_j}, g_{k_j}} = u \quad \text{ in } L^{2}(U).
\end{equation}
Using \eqref{weakcomus2}, we now pass to the limit in the weak formulation in \eqref{divformgkcase2}, and hence we get
\[
\int_U \langle A \nabla u + u\,\mathbf{H}, \nabla \varphi \rangle\, dx = 0 \quad \text{for all } \varphi \in H_0^{1,2}(U).
\]
By the uniqueness in Corollary~\ref{existeanduniqco},  it follows that $u=0$ in $U$.
On the other hand, by \eqref{unkgk1} and \eqref{weakcomus2},
\[
1 = \lim_{j\to\infty} \| u_{n_{k_j}, g_{k_j}} \|_{L^{2}(U)} = \| u \|_{L^2(U)}=0.
\]
Since both cases result in a contradiction, the initial assumption must be false. Thus, \eqref{uniformestimacom} does hold. Therefore, \eqref{appenergestim} directly follows from Lemma \ref{lempactprelim}. \\ \\
(ii)  By the weak compactness of bounded subsets in $H_0^{1,2}(U)$ and the Rellich-Kondrachov compactness theorem applied to \eqref{appenergestim}, there exists a subsequence of $(u_{n,g})_{n \geq 1}$, which we still denote by $(u_{n,g})_{n \geq 1}$, such that \eqref{weakconl2con} holds. The rest follows from the lower semi-continuity of the norm used in the estimates \eqref{uniformestimacom} and \eqref{appenergestim}.
\end{proof}
\begin{rem}
Assume {\bf (T1)}, where $d \geq 3$ is replaced by $d = 2$, and suppose that $\mathbf{H} \in L^p(U, \mathbb{R}^2)$ for some $p \in (2, \infty)$. 
In analogy with the proofs of Propositions \ref{bilineaprop}, \ref{laxfredexist}, \ref{weakmaxim} and Corollary \ref{existeanduniqco}, we obtain that for each $g \in H^{-1,2}(U)$, there exists a unique function $u_g \in H^{1,2}_0(U)$ satisfying
\[
\int_U \langle A \nabla u_{g} + u_{g}\,\mathbf{H}, \nabla \varphi \rangle \, dx  = \langle g, \varphi \rangle_{H^{-1,2}(U)} \quad \text{for all } \varphi \in H^{1,2}_0(U).
\]
For each $n \geq 1$, let $A_n = (a_{n, ij})_{1 \leq i,j \leq d}$ be a matrix-valued function satisfying \eqref{anuniformel}, and assume that $\lim_{n \rightarrow \infty} a_{n, ij} = a_{ij}$ in $L^2(U)$ for all $1 \leq i, j \leq d$. Let $\mathbf{H}_n \in L^p(U, \mathbb{R}^2)$ be the mollification of the zero extension of $\mathbf{H}$ to $\mathbb{R}^d$ as in Lemma \ref{lempactness}. For each $n \geq 1$, let $u_{n,g} \in H^{1,2}_0(U)$ be the unique function satisfying
\[
\int_U \langle A_n \nabla u_{n,g} + u_{n, g}\,\mathbf{H}_n, \nabla \varphi \rangle \, dx  = \langle g, \varphi \rangle_{H^{-1,2}(U)} \quad \text{for all } \varphi \in H^{1,2}_0(U).
\]
Then, by a similar argument to that in the proof of Lemma \ref{lempactness}, we obtain the results in Lemma \ref{lempactness}.
\end{rem}
\centerline{}
The following lemma is a generalization of \cite[Lemma 3.4]{LT21}, and in particular, it remains valid even under the assumption $\mathbf{H} \in L^d(U, \mathbb{R}^d)$.
Its proof requires a highly delicate approximation argument, in which Lemma~\ref{lempactness} plays a central role.
\begin{lem} \label{mainkeyprese}
Assume {\bf (T1)}. Assume that $u \in H^{1,2}(U)$ satisfies
\[
\int_U \langle A \nabla u + u\mathbf{H}, \nabla \varphi \rangle dx \leq 0 \quad \text{ for all } \varphi \in C_0^\infty(U),\ \varphi \geq 0.
\]
Then, we have
\[
\int_U \langle A \nabla u^+ + u^+ \mathbf{H}, \nabla \varphi \rangle dx \leq 0 \quad \text{ for all } \varphi \in C_0^\infty(U),\ \varphi \geq 0.
\]
\end{lem}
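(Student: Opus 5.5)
We follow the strategy of \cite[Lemma 3.4]{LT21}, replacing the $L^p$-drift by mollified drifts and using Lemma~\ref{lempactness} to pass to the limit. The point is that for a smooth bounded drift the statement can be proved directly by a Kato-type argument. The critical case $\mathbf{H}\in L^d$ is then recovered via the uniform estimates of Lemma~\ref{lempactness}.

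\emph{Step 1 (localization).} Fix $\varphi\in C_0^\infty(U)$ with $\varphi\ge 0$, and choose an open set $V$ with smooth boundary such that $\operatorname{supp}\varphi\subset V\subset\overline V\subset U$. It suffices to prove the inequality on $V$. There $u\in H^{1,2}(V)$, and its trace is not necessarily zero, so we handle it as follows. Define
\[
\langle g,\psi\rangle:=-\int_V\langle A\nabla u+u\mathbf{H},\nabla\psi\rangle\,dx .
\]
Then $g\in H^{-1,2}(V)$ by H\"older and Sobolev, since $u\in L^{2d/(d-2)}(V)$, and $g\ge 0$ as a distribution by hypothesis. Hence $g$ is a nonnegative Radon measure lying in $H^{-1,2}(V)$.

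\emph{Step 2 (smooth approximation).} Let $A_n:=A*\eta_n$ and $\mathbf{H}_n:=\mathbf{H}*\eta_n$. Then $A_n$ satisfies \eqref{anuniformel} and $a_{n,ij}\to a_{ij}$ in $L^2(V)$. Let $w_n\in H^{1,2}_0(V)$ solve
\[
\int_V\langle A_n\nabla w_n+w_n\mathbf{H}_n,\nabla\psi\rangle\,dx=\langle g,\psi\rangle ,
\]
which is possible by Corollary~\ref{existeanduniqco}. Since $g\ge0$, the weak maximum principle (Proposition~\ref{weakmaxim}) gives $w_n\ge 0$. Set $v:=u+w$, where $w$ solves the same problem with $A,\mathbf{H}$. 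Then $v$ satisfies
\[
\int_V\langle A\nabla v+v\mathbf{H},\nabla\psi\rangle\,dx=0 ,
\]
and $v-u=w\ge 0$, so $v\ge u$. This alone does not give $u^+$, so we instead work with $u$ itself.

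\emph{Step 3 (Kato inequality for smooth coefficients).} For smooth $A_n,\mathbf{H}_n$, take the test function $\varphi\,\phi_\varepsilon(u_n)$ with $\phi_\varepsilon(t)=\frac{t^+}{t^++\varepsilon}$. Here $u_n$ is an approximating subsolution, defined by $u_n:=u-w+w_n$, which is an $(A_n,\mathbf{H}_n)$-subsolution up to an error $\to0$ in $H^{-1,2}$. The test gives
\[
\int\langle A_n\nabla u_n+u_n\mathbf{H}_n,\nabla(\varphi\phi_\varepsilon(u_n))\rangle\le \langle \text{err}_n,\cdot\rangle .
\]
Expanding, the term $\int\varphi\phi_\varepsilon'(u_n)\langle A_n\nabla u_n,\nabla u_n\rangle\ge0$ is dropped. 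The drift term
\[
\int\varphi\,u_n\phi'_\varepsilon(u_n)\langle\mathbf{H}_n,\nabla u_n\rangle
\]
is controlled because $t\phi_\varepsilon'(t)=\varepsilon t/(t+\varepsilon)^2\le \tfrac14$ is supported where $0<u_n$, and it tends to $0$ as $\varepsilon\to0$ by dominated convergence, using $\mathbf{H}_n\in L^\infty$ and the fact that $1_{\{0<u_n\}}\,\varepsilon t/(t+\varepsilon)^2\to0$ pointwise. Letting $\varepsilon\to0$ gives
\[
\int\langle A_n\nabla u_n^++u_n^+\mathbf{H}_n,\nabla\varphi\rangle\le \text{err}_n .
\]

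\emph{Step 4 (passage to the limit).} By Lemma~\ref{lempactness}(ii), $w_n\to w$ weakly in $H^{1,2}_0(V)$ and strongly in $L^2$ along a subsequence. Hence $u_n\to u$ in $L^2$ with bounded gradients, and $u_n^+\to u^+$ weakly in $H^{1,2}$ and strongly in $L^2$. The term $\int u_n^+\langle\mathbf{H}_n,\nabla\varphi\rangle$ converges because $\mathbf{H}_n\to\mathbf{H}$ in $L^d$ and $u_n^+\to u^+$ in $L^{2}$, bounded in $L^{2d/(d-2)}$, so interpolation handles the product.

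\emph{Main obstacle.} The hardest term is $\int\langle A_n\nabla u_n^+,\nabla\varphi\rangle$. Since $A_n\to A$ only in $L^2$ with a uniform $L^\infty$ bound, we write
\[
A_n^T\nabla\varphi\to A^T\nabla\varphi \quad\text{strongly in } L^2 ,
\]
which then pairs with the weak convergence of $\nabla u_n^+$. A second, more delicate point is making the error term $\text{err}_n$ rigorous. It is
\[
\int\langle (A-A_n)\nabla(u-w)+(u-w)(\mathbf{H}-\mathbf{H}_n),\nabla\cdot\rangle ,
\]
and since it is tested against $\varphi\phi_\varepsilon(u_n)$ with unbounded gradient, I would take $\varepsilon\to0$ \emph{after} $n\to\infty$. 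This requires bounding $\nabla\phi_\varepsilon(u_n)$ by $\varepsilon^{-1}|\nabla u_n|$, which is uniform in $n$ for fixed $\varepsilon$, so the error vanishes for each fixed $\varepsilon$. Ordering these limits correctly is where I expect the real work.
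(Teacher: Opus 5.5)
Your Steps 3–4 send $\varepsilon\to0$ at fixed $n$, and that order does not close, as you suspect in your last paragraph. The right-hand side of the tested inequality is $\int_V\langle\mathbf{G}_n,\nabla(\varphi\phi_\varepsilon(u_n))\rangle\,dx$, where $\mathbf{G}_n$ is your $L^2$ error field built from $(A-A_n)\nabla(u-w)$ and $(u-w)(\mathbf{H}-\mathbf{H}_n)$. Its piece $\int_V\varphi\,\phi_\varepsilon'(u_n)\langle\mathbf{G}_n,\nabla u_n\rangle\,dx$ has $\phi_\varepsilon'(u_n)\approx\varepsilon^{-1}$ just above $\{u_n=0\}$. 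So $\|\nabla(\varphi\phi_\varepsilon(u_n))\|_{L^2}$ is in general unbounded as $\varepsilon\to0$, and the term behaves like a flux of $\mathbf{G}_n$ across the zero level set of $u_n$. Smallness of $\|\mathbf{G}_n\|_{L^2}$ does not control such a flux. Hence the inequality ``$\le\mathrm{err}_n$ with $\mathrm{err}_n\to0$'' is unjustified, and Step 4 has no valid inequality to pass to the limit in.

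The paper avoids any error term by making the approximants \emph{exact} subsolutions. It extends $u|_W$ to $\hat u\in H^{1,2}_0(B)$, mollifies the whole flux $\mathbf{F}=A\nabla\hat u+\hat u\mathbf{H}$, and solves with data $\mathbf{F}_n$. Since $\int\langle\mathbf{F}_n,\nabla\varphi\rangle\,dx=\int\langle\mathbf{F},\nabla(\varphi*\eta_n)\rangle\,dx\le0$ for $\varphi\in C_0^\infty(V)$, $\varphi\ge0$, the smooth $u_n$ satisfy $\mathcal{L}_nu_n\ge0$ in $V$. The smooth Kato computation then gives an exact inequality for $u_n^+$. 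Finally $u_n^+\rightharpoonup\hat u^+$ finishes, with the limit of $u_n$ identified as $\hat u$ by uniqueness.

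The reversal you propose does repair your argument. For fixed $\varepsilon$, the error is at most $\|\mathbf{G}_n\|_{L^2}\|\nabla(\varphi\phi_\varepsilon(u_n))\|_{L^2}\to0$. The other terms converge because $\phi_\varepsilon(u_n)$ and $u_n\phi_\varepsilon'(u_n)$ are bounded and converge a.e., $A_n\to A$ a.e.\ boundedly, $\mathbf{H}_n\to\mathbf{H}$ in $L^d$, and $\nabla u_n\rightharpoonup\nabla u$ in $L^2$. But then Step 4 is never used. What you obtain after $n\to\infty$ is exactly the inequality from testing the hypothesis for $u$ itself with $\varphi\,u^+/(u^++\varepsilon)$. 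So the mollification, $w$, $w_n$ and Lemma~\ref{lempactness} are superfluous.

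Concretely, since $A\nabla u+u\mathbf{H}\in L^2_{\mathrm{loc}}(U)$, the hypothesis extends by density to nonnegative compactly supported $H^{1,2}$ test functions, and this test gives
\[
\int_U\frac{u^+}{u^++\varepsilon}\,\langle A\nabla u+u\mathbf{H},\nabla\varphi\rangle\,dx
+\int_U\frac{\varepsilon\,\varphi}{(u^++\varepsilon)^2}\,\langle A\nabla u^+,\nabla u^+\rangle\,dx
+\int_U\frac{\varepsilon\,u^+\varphi}{(u^++\varepsilon)^2}\,\langle\mathbf{H},\nabla u^+\rangle\,dx\le0 .
\]
The middle integral is nonnegative. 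The last tends to $0$ by dominated convergence with majorant $\tfrac14\varphi\|\mathbf{H}\|\,\|\nabla u\|\in L^1$; this needs only $\mathbf{H}\in L^2_{\mathrm{loc}}$, not the $L^\infty$ bound on $\mathbf{H}_n$ you invoke in Step 3. The first tends to $\int_U 1_{\{u>0\}}\langle A\nabla u+u\mathbf{H},\nabla\varphi\rangle\,dx=\int_U\langle A\nabla u^++u^+\mathbf{H},\nabla\varphi\rangle\,dx$.

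That is a complete proof, much shorter than both yours and the paper's. The paper's exact-subsolution construction is what legitimizes sending $\varepsilon\to0$ first, but no regularization is actually needed for this lemma.
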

\begin{proof}
Let $V$ be an arbitrary open subset of $U$ with $\overline{V} \subset U$. To show the assertion, it is enough to show~that
\begin{equation*} 
\int_U \langle A \nabla u^+ + u^+ \mathbf{H}, \nabla \varphi \rangle dx \leq 0 \quad \text{for all } \varphi \in C_0^\infty(V),\ \varphi \geq 0.
\end{equation*}
Let $W$ be an open set with a smooth boundary such that
\[
\overline{V} \subset W \subset \overline{W} \subset U.
\]
Let $B$ be an open ball such that $\overline{U} \subset B$. By \cite[Theorem 4.7]{EG15}, $u \in H^{1,2}(W)$ can be extended to a function $\hat{u} \in H_0^{1,2}(B)$. Moreover, by \cite[Theorem 4.4(iii)]{EG15}, we have $\hat{u}^+ \in H_0^{1,2}(B)$ with
\[
\nabla \hat{u}^+ = \begin{cases}
\nabla \hat{u} & \text{a.e. on } \{ \hat{u} > 0 \}, \\
0 & \text{a.e. on } \{ \hat{u} \leq 0 \}.
\end{cases}
\]
Extend $\mathbf{H} \in L^d(U, \mathbb{R}^d)$ to $\mathbb{R}^d$ by zero extension. Define 
$$
\mathbf{F} := A \nabla \hat{u} + \hat{u}\mathbf{H} \in L^2(\mathbb{R}^d, \mathbb{R}^d). 
$$
Let $\eta$ be a standard mollifier on $\mathbb{R}^d$, and for each $n \geq 1$, define $\eta_n \in C_0^{\infty}(B_{1/n})$ by $\eta_{n}(x):=n^d \eta(nx)$, $x \in \mathbb{R}^d$.
For each $n \in \mathbb{N}$ and $1 \leq i,j \leq d$, define
\[
a_{n, ij} := a_{ij} * \eta_n, \quad A_n := (a_{n, ij})_{1 \leq i,j \leq d}, \quad \mathbf{H}_n := \mathbf{H} * \eta_n, \quad \mathbf{F}_n := \mathbf{F} * \eta_n \quad \text{on } \mathbb{R}^d.
\]
Then, $a_{n, ij} \in C^\infty(\mathbb{R}^d)$, and $\mathbf{H}_n$, $\mathbf{F}_n \in C^\infty(\mathbb{R}^d, \mathbb{R}^d)$ for all $n \geq 1$ and $1 \leq i,j \leq d$, and it holds that
\[
\lim_{n \rightarrow \infty}a_{n, ij} =a_{ij} \;\text{in } L^2(B, \mathbb{R}^d), \quad \lim_{n \rightarrow \infty} \mathbf{H}_n = \mathbf{H} \;\text{ in } L^d(B, \mathbb{R}^d), \quad \lim_{n \rightarrow \infty} \mathbf{F}_n = \mathbf{F} \;\text{ in } L^2(B, \mathbb{R}^d).
\]
Furthermore, \eqref{anuniformel} holds. Choose $\delta > 0$ such that $\overline{B}_\delta(x) \subset W$ for all $x \in \overline{V}$. Pick $N \in \mathbb{N}$ with $\frac{1}{N} < \delta$. Then, for any $n \geq N$ and $\varphi \in C_0^\infty(V)$ with $\varphi \geq 0$, we have $\varphi* \eta_n \in C_0^{\infty}(W)$, $\varphi* \eta_n  \geq 0$, and
\begin{align}
&\int_U \langle \mathbf{F}_n, \nabla \varphi \rangle dx = \int_{\mathbb{R}^d} \langle \mathbf{F}_n, \nabla \varphi \rangle dx = \int_{\mathbb{R}^d} \langle \mathbf{F}, \nabla (\varphi *\eta_n) \rangle dx  \nonumber \\
&= \int_{\mathbb{R}^d} \langle A \nabla \hat{u} + \hat{u}\mathbf{H}, \nabla (\varphi * \eta_n) \rangle dx =\int_{U} \langle A \nabla u + u \mathbf{H}, \nabla (\varphi * \eta_n) \rangle dx \leq 0.  \label{fndivnegati}
\end{align}
According to Corollary \ref{existeanduniqco}, there exists a unique function $u_n \in H_0^{1,2}(B)$ such that
\begin{equation} \label{fndividtest}
\int_B \langle A_n \nabla u_n + u_n \mathbf{H}_n, \nabla \tilde{\varphi} \rangle dx = \int_B \langle \mathbf{F}_n, \nabla \tilde{\varphi} \rangle dx \quad \text{for all } \tilde{\varphi} \in C_0^\infty(B). 
\end{equation}
By Lemma \ref{lempactness}(i), we obtain
\[
\| u_n \|_{H_0^{1,2}(B)} \leq (c_1C + c_2) \| \mathbf{F}_n \|_{L^2(B, \mathbb{R}^d)} \leq (c_1C + c_2)  \| \mathbf{F} \|_{L^2(B, \mathbb{R}^d)},
\]
where $c_1, c_2 > 0$ are constants as in Lemma \ref{lempactprelim} and $C > 0$ is the constant as in Lemma \ref{lempactness}(i).
By the weak compactness of bounded subsets in $H_0^{1,2}(B)$ and using \cite[Theorem 4.4(iii)]{EG15}, there exist $\tilde{u} \in H_0^{1,2}(B)$ and a subsequence (still denoted by $u_n$) such that
\begin{equation} \label{weaklimitplu}
\lim_{n \rightarrow \infty} u_n = \tilde{u} \quad \text{and} \quad \lim_{n \rightarrow \infty} u_n^+ = \tilde{u}^+ \quad \text{weakly in } H_0^{1,2}(B).
\end{equation}
Hence letting $n \rightarrow \infty$, we get
\[
\int_B \langle A \nabla \tilde{u} + \tilde{u} \mathbf{H}, \nabla \tilde{\varphi} \rangle dx = \int_B \langle \mathbf{F}, \nabla \tilde{\varphi} \rangle dx = \int_B \langle A \nabla \hat{u} + \hat{u} \mathbf{H}, \nabla \tilde{\varphi} \rangle dx \quad \text{for all } \tilde{\varphi} \in C_0^\infty(B).
\]
By the uniqueness in Proposition \ref{weakmaxim}, we conclude that $\tilde{u} = \hat{u}$ in $H^{1,2}_0(B)$. Thus, by \eqref{weaklimitplu}, we have
\begin{equation} \label{weaklimitplu2}
\lim_{n \rightarrow \infty} u_n = \hat{u} \quad \text{and} \quad \lim_{n \rightarrow \infty} u_n^+ = \hat{u}^+ \quad \text{weakly in } H_0^{1,2}(B).
\end{equation}
Define the operator
\[
\mathcal{L}_n u_n := \sum_{i,j=1}^d a_{n, ij} \partial_i \partial_j u_n + \langle \mathbf{H}_n + \operatorname{div} A_n, \nabla u_n \rangle + (\operatorname{div} \mathbf{H}_n) u_n.
\]
Then from \eqref{fndivnegati} and \eqref{fndividtest}, we deduce that for all $n \geq N$ and $\varphi \in C_0^\infty(V)$ with $\varphi \geq 0$,
\[
- \int_V \mathcal{L}_n u_n \cdot \varphi dx = \int_V \langle A_n \nabla u_n + u_n \mathbf{H}_n, \nabla \varphi \rangle dx = \int_{U} \langle \mathbf{F}_n, \nabla \varphi \rangle dx \leq 0,
\]
which implies 
\begin{equation} \label{ptwiselvpr}
\mathcal{L}_n u_n \geq 0 \quad \text{in } V \quad \text{for all } n \geq N.
\end{equation}
Let $\phi$ be a standard mollifier on $\mathbb{R}$, and for each $n \geq 1$, define $\phi_n \in C_0^{\infty}(-1/n, 1/n)$ by $\phi_n(t) := n \phi(nt)$ for $t \in \mathbb{R}$. 
For each $\varepsilon > 0$, define
\[
f_\varepsilon(z) := 
\begin{cases}
\sqrt{z^2 + \varepsilon^2} - \varepsilon & \text{if } z \geq 0, \\
0 & \text{if } z < 0.
\end{cases}
\]
Then, $f_\varepsilon \in C^1(\mathbb{R})$ and its derivative $f_\varepsilon'$ belongs to $H^{1,\infty}(\mathbb{R}) \cap C(\mathbb{R})$. In particular, we have
\[
f_\varepsilon'(z) = 
\begin{cases}
\frac{z}{\sqrt{z^2 + \varepsilon^2}} & \text{if } z \geq 0, \\
0 & \text{if } z < 0,
\end{cases}
\quad \text{and} \quad
f_\varepsilon''(z) = 
\begin{cases}
\frac{\varepsilon^2}{(z^2 + \varepsilon^2)^{3/2}} & \text{if } z > 0, \\
0 & \text{if } z < 0.
\end{cases}
\]
Observe that 
\begin{equation} \label{varepconzero}
\lim_{\varepsilon \rightarrow 0+} f_\varepsilon(z) = z^+, \quad \text{and} \quad \lim_{\varepsilon \rightarrow 0+} f_\varepsilon'(z) = 1_{(0,\infty)}(z) \quad \text{for all } z \in \mathbb{R}.
\end{equation}
Let $f_{\varepsilon, k} := f_\varepsilon * \phi_k$. Then $f'_{\varepsilon, k} \geq 0$ and $f''_{\varepsilon, k} \geq 0$ on $\mathbb{R}$. Moreover,
\begin{equation} \label{uniformconver}
\lim_{k \rightarrow \infty} f_{\varepsilon, k}(u_n) = f_\varepsilon(u_n), \quad \text{and} \quad \lim_{k \rightarrow \infty} f'_{\varepsilon, k}(u_n) = f_\varepsilon'(u_n) \quad \text{uniformly on } U.
\end{equation}
Thus, for any $\varphi \in C_0^\infty(V)$ with $\varphi \geq 0$, it follows from \eqref{uniformconver} and \eqref{ptwiselvpr} that
\begin{align*}
&\int_U \langle A_n \nabla f_\varepsilon(u_n) + f_\varepsilon(u_n) \mathbf{H}_n, \nabla \varphi \rangle dx 
= \lim_{k \to \infty} \int_U \langle A_n \nabla f_{\varepsilon, k}(u_n) + f_{\varepsilon, k}(u_n) \mathbf{H}_n, \nabla \varphi \rangle dx \\
&= \lim_{k \to \infty} \Bigg( - \int_U f'_{\varepsilon, k}(u_n) \mathcal{L}_n u_n \varphi \, dx 
- \int_U f''_{\varepsilon, k}(u_n) \langle A_n \nabla u_n, \nabla u_n \rangle \varphi \, dx \\
&\qquad \qquad \qquad - \int_U \operatorname{div} \mathbf{H}_n \big(f_{\varepsilon, k}(u_n) - u_n f'_{\varepsilon, k}(u_n)\big) \varphi \, dx \Bigg) \\
&\leq - \int_U \operatorname{div} \mathbf{H}_n \big(f_{\varepsilon}(u_n) - u_n f'_{\varepsilon}(u_n)\big) \varphi \, dx.
\end{align*}
Since the right-hand side tends to zero as $\varepsilon \to 0^+$ by \eqref{varepconzero}, we conclude from \cite[Theorem 4.4(iii)]{EG15} and \eqref{varepconzero} that for all $n \geq N$,
\[
\int_U \langle A_n \nabla u_n^+  +u_n^+\mathbf{H}_n, \nabla \varphi \rangle dx \leq 0 \quad \text{ for all } \varphi \in C_0^\infty(V),\ \varphi \geq 0.
\]
Finally, by taking the weak limit of $u_n^+$ in \eqref{weaklimitplu2} as $n \rightarrow \infty$, we obtain
\[
\int_U \langle A \nabla u^+ + u^+\mathbf{H}, \nabla \varphi \rangle dx \leq 0 \quad \text{ for all } \varphi \in C_0^\infty(V),\ \varphi \geq 0,
\]
which completes the proof.
\end{proof}

\section{Proof of main result} \label{sec4}
 
The following theorem is a key result of this paper, which corresponds to \cite[Theorem 3.1]{L25jm} but weakens the assumption on $\mathbf{H}$ from $L^p(U, \mathbb{R}^d)$ to $L^d(U, \mathbb{R}^d)$ by taking advantage of the additional structure on its divergence. This additional structure allows us to apply H\"{o}lder regularity and the Harnack inequality. The idea of the proof originates from \cite[Theorem 1]{BRS12} (cf.~\cite[Chapter 2]{BKRS15}), where the coefficient matrix $A$ is assumed to lie in $VMO$.

\begin{theo} \label{existrho}
Assume that {\bf (T)} holds. Then the following statements hold:
\begin{itemize}
\item[(i)]
Let $x_1 \in U$. Then, there exists $\rho \in H^{1,2}(B_{r}(x_0)) \cap C(B_{r}(x_0))$ with $\rho(x)>0$ for all $x \in B_{r}(x_0)$ and $\rho(x_1)=1$ such that
\begin{equation} \label{infinva}
\int_{B_{r}(x_0)} \langle A^T \nabla \rho  + \rho \mathbf{H}, \nabla \varphi \rangle dx = 0, \quad \text{ for all $\varphi \in C_0^{\infty}(B_{r}(x_0))$}.
\end{equation}
\item[(ii)]
Let $\rho$ be as in Theorem \ref{existrho}(i). Then, there exists a constant $\tilde{K}_1 \geq 1$ which only depends on $d$, $\lambda$, $M$, $B_r(x_0)$, $p$, $\tilde{q}$ and $\mathbf{H}$ such that 
$$
1 \leq \max_{\overline{U}} \rho  \leq \tilde{K}_1 \min_{\overline{U}} \rho \leq \tilde{K}_1.
$$
\end{itemize}
\end{theo}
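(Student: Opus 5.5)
Following \cite[Theorem 1]{BRS12}, I would build $\rho$ as a limit of solutions of approximating problems on an exhausting sequence of balls, with the positivity of $\rho$ coming from the weak maximum principle and two-sided control from the Harnack inequality. The key observation is that the adjoint equation \eqref{infinva}, written in nondivergence form, is formally $-\operatorname{div}(A^T\nabla\rho) - \langle \mathbf{H},\nabla \rho\rangle - (\operatorname{div}\mathbf{H})\rho = 0$. The decomposition in {\bf (T)} is exactly what makes this tractable. The potential term splits as $\operatorname{div}\mathbf{H} = \operatorname{div}\mathbf{H}_1 + \tilde h$, where $\tilde h \in L^{\tilde q}$ with $\tilde q > d/2$ is an admissible zero-order coefficient for De Giorgi--Moser theory. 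The term $\rho\mathbf{H}_1$ with $\mathbf{H}_1 \in L^p$, $p>d$, is an admissible lower-order divergence term. This leaves the $L^d$ part $\mathbf{H}_2$, which enters only through $\operatorname{div}(\rho \mathbf{H}_2) = \langle \mathbf{H}_2,\nabla\rho\rangle + \tilde h \rho$. Since the drift $\langle \mathbf{H}_2,\nabla\rho\rangle$ is not in divergence form, it has to be handled jointly with the rest.

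\textbf{Construction of $\rho$.} Fix radii $r_k \uparrow r$, with $B_{r_1}(x_0)\supset \overline U$. On $B_{r_k}(x_0)$ I would solve for $w_k\in H^{1,2}_0(B_{r_k}(x_0))$
\[
\int \langle A^T\nabla w_k + w_k\mathbf{H},\nabla\varphi\rangle\,dx = -\int \langle \mathbf{H},\nabla\varphi\rangle\,dx .
\]
This is possible by Corollary~\ref{existeanduniqco} applied with $A^T$ in place of $A$; note that {\bf (T1)} holds on $B_{r_k}(x_0)$ and that $\mathbf{H}\in L^2$. Then $\rho_k := w_k+1$ solves \eqref{infinva} on $B_{r_k}(x_0)$ with $\rho_k = 1$ on the boundary. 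To get $\rho_k \ge 0$, I would apply Lemma~\ref{mainkeyprese} to $u=-\rho_k$, so that $(-\rho_k)^+$ is a subsolution, and then invoke Proposition~\ref{weakmaxim}. The comparison requires a little care because $(-\rho_k)^+ \in H^{1,2}_0(B_{r_k}(x_0))$ owing to the boundary value $1$.

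\textbf{Local regularity and Harnack.} Next I would prove interior Hölder continuity and a Harnack inequality for nonnegative solutions of \eqref{infinva}, with constants depending only on $d,\lambda,M,p,\tilde q$ and on $\mathbf{H}$. The constants should come through the norms $\|\mathbf{H}_1\|_{L^p}$ and $\|\tilde h\|_{L^{\tilde q}}$, together with the modulus of smallness of $\|\mathbf{H}_2\|_{L^d}$ on small balls. This is done by Moser iteration with test functions $\zeta^2\rho^{\beta}$ (including negative $\beta$, then $\log\rho$ and John--Nirenberg). The $\mathbf{H}_2$ contribution, $\int \rho\langle \mathbf{H}_2, \nabla(\zeta^2\rho^\beta)\rangle$, I would rewrite using $\operatorname{div}\mathbf{H}_2=\tilde h$. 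Specifically, $\rho\,\nabla\rho^\beta = \frac{\beta}{\beta+1}\nabla \rho^{\beta+1}$, so this piece becomes $-\frac{\beta}{\beta+1}\int \tilde h\,\zeta^2\rho^{\beta+1}$ plus a cutoff term $\int \rho^{\beta+1}\zeta\langle\mathbf{H}_2,\nabla\zeta\rangle$. The first is absorbed as an $L^{\tilde q}$ potential, and the second is controlled using smallness of $\|\mathbf{H}_2\|_{L^d}$ on small balls together with Sobolev. This is, I expect, the main obstacle: the manipulation must be justified at the $H^{1,2}$ level, via truncations and mollification as in Lemma~\ref{mainkeyprese}. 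One must also check that the factor $\frac{\beta}{\beta+1}$ stays uniform across the iteration; it blows up near $\beta=-1$, which must be treated by the logarithmic estimate separately. Remark~\ref{remcounter} indicates that without the divergence structure this step genuinely fails.

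\textbf{Passage to the limit and part (ii).} Normalize $\tilde\rho_k := \rho_k/\rho_k(x_1)$, which requires $\rho_k(x_1)>0$; this follows from the strong form of Harnack, since $\rho_k\not\equiv 0$ because its boundary value is $1$. Harnack on a chain of balls covering each compact $K\subset B_r(x_0)$ gives $c_K^{-1}\le\tilde\rho_k\le c_K$ on $K$. Caccioppoli then gives local $H^{1,2}$ bounds, and Hölder gives equicontinuity. Arzelà--Ascoli plus weak compactness yields a limit $\rho\in H^{1,2}_{loc}\cap C(B_r(x_0))$ with $\rho>0$, $\rho(x_1)=1$, satisfying \eqref{infinva}. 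To obtain $\rho\in H^{1,2}(B_r(x_0))$ globally rather than just locally, I would instead fix the domain $B_r(x_0)$ and normalize the $w$ solution directly, if the Harnack chain reaches $\overline U$ without needing the boundary. Part (ii) is then Harnack applied on a fixed finite cover of the compact set $\overline U\subset B_r(x_0)$, giving $\max_{\overline U}\rho\le\tilde K_1\min_{\overline U}\rho$. Since $x_1\in U$, we have $\min_{\overline U}\rho\le 1\le\max_{\overline U}\rho$, and this yields the chain of inequalities in (ii).
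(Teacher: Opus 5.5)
Your proposal is correct, and the construction and positivity argument are the paper's once you drop the exhaustion. The exhaustion is unnecessary. The paper works on $B_r(x_0)$ from the start: Corollary~\ref{existeanduniqco} with $A^T$, then $w=v+1$, then $w\ge 0$ via Lemma~\ref{mainkeyprese} applied to $-w$ and Proposition~\ref{weakmaxim}. Interior zeros are excluded by Harnack together with $\mathcal{T}(w)=1$, and $w$ is normalized at $x_1$. This is what you fall back on anyway, and it gives $\rho\in H^{1,2}(B_r(x_0))$ for free.

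The genuine difference is the regularity step. The paper uses $\operatorname{div}\mathbf{H}_2=\tilde h$ once, through an $H^{1,2}$-approximation of $w$, to rewrite the equation as \eqref{infintransform}. There $\mathbf{H}_1\in L^p$ stays in divergence form, $\mathbf{H}_2\in L^d$ becomes a non-divergence drift, and $\tilde h\in L^{\tilde q}$ becomes a potential. It then simply quotes Stampacchia's H\"older and Harnack theorems, which tolerate an $L^d$ drift in that slot. This is short and avoids all technical justification.

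You instead redo Moser iteration with the identity used inside each energy estimate. That costs more work. Besides the truncations and the logarithmic step you mention, note that constants do not solve \eqref{infinva}. So for H\"older continuity you need Harnack with right-hand side: $k-\rho$ and $\rho-k$ satisfy the equation with the extra terms $\pm k\bigl(\int\langle\mathbf{H}_1,\nabla\varphi\rangle\,dx-\int\tilde h\varphi\,dx\bigr)$. This is standard, but it must be included.

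In return, your identity gives more than you claim. The whole $\mathbf{H}_2$-contribution equals $\frac{2}{\beta+1}\int\rho^{\beta+1}\zeta\langle\mathbf{H}_2,\nabla\zeta\rangle\,dx-\frac{\beta}{\beta+1}\int\tilde h\,\zeta^2\rho^{\beta+1}\,dx$. The cutoff term is absorbed by Sobolev and Young using only $\|\mathbf{H}_2\|_{L^d}$. The logarithmic step needs only $\int_{B_{2R}}\|\mathbf{H}\|^2dx\le\|\mathbf{H}\|_{L^d}^2|B_{2R}|^{1-2/d}$.

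So, as far as I can see, no smallness of $\mathbf{H}_2$ on small balls is needed. The resulting $\tilde K_1$ would then depend on $\mathbf{H}$ only through $\|\mathbf{H}_1\|_{L^p}$, $\|\mathbf{H}_2\|_{L^d}$ and $\|\tilde h\|_{L^{\tilde q}}$, besides $d,\lambda,M,p,\tilde q$ and the position of $\overline U$ in $B_r(x_0)$. This is exactly the norm-dependent control that the paper's concluding section says its citation-based argument does not provide.
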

\begin{proof}
(i) By Corollary \ref{existeanduniqco}, there exists $v \in H^{1,2}_0(B_r(x_0))$ such that
\begin{equation} \label{infinvahbilin}
\int_{B_{r}(x_0)} \langle A^T \nabla v  + v \mathbf{H}, \nabla \varphi \rangle dx = -\int_{B_r(x_0)} \langle \mathbf{H}, \nabla \varphi \rangle\,dx \quad \text{ for all $\varphi \in H_0^{1,2}(B_{r}(x_0))$}.
\end{equation}
Let $w = v+1 \in H^{1,2}(B_r(x_0))$. Let $\mathcal{T}:H^{1,2}(B_r(x_0)) \rightarrow L^2(\partial B_r(x_0))$ be the trace operator as in \cite[Theorem 4.6]{EG15}. Then, 
\begin{equation} \label{tracewone}
\mathcal{T}(w)=\mathcal{T}(v)+1=1 \quad \text{  in $L^2(\partial B_r(x_0))$.} 
\end{equation}
Observe that from \eqref{infinvahbilin}
\begin{equation} \label{infinvahbilin2}
\int_{B_{r}(x_0)} \langle A^T \nabla w  + w \mathbf{H}, \nabla \varphi \rangle dx = 0 \quad \text{ for all $\varphi \in H_0^{1,2}(B_{r}(x_0))$}.
\end{equation}
Meanwhile, $-w = -v-1 \leq -v$ in $B_r(x_0)$, and hence $0 \leq (-w)^+ \leq (-v)^+$ in $B_r(x_0)$. Since $(-v)^+ \in H^{1,2}_0(B_r(x_0))$, it follows by \cite[Proposition A.9]{L25jm} that $(-w)^+ \in H^{1,2}_0(B_r(x_0))$. Therefore, applying Lemma \ref{mainkeyprese} to \eqref{infinvahbilin2} where $w$ is replaced by $-w$, we have
\begin{equation*} \label{infinvahbilin3}
\int_{B_{r}(x_0)} \langle A^T \nabla (-w)^+  + (-w)^+ \mathbf{H}, \nabla \varphi \rangle dx \leq 0 \quad \text{ for all $\varphi \in H_0^{1,2}(B_{r}(x_0))$ with $\varphi \geq 0$}.
\end{equation*}
By Proposition \ref{weakmaxim},  $(-w)^+ \leq 0$ in $B_r(x_0)$, which implies
\begin{equation} \label{wpositivcon}
w \geq 0 \quad \text{ in $B_r(x_0)$}.
\end{equation}
Let $w_n \in C_0^{\infty}(B_r(x_0))$ be such that $\lim_{n \rightarrow \infty} w_n =w$ in $H^{1,2}_0(B_r(x_0))$. For each $\varphi \in C_0^{\infty}(B_r(x_0))$, we have
\begin{align*}
&\int_{B_r(x_0)} \langle w\mathbf{H}, \nabla \varphi \rangle\,dx =\int_{B_r(x_0)}\langle w\mathbf{H}_1, \nabla \varphi \rangle\,dx +\int_{B_r(x_0)}\langle w\mathbf{H}_2, \nabla \varphi \rangle\,dx \\
&=\int_{B_r(x_0)}\langle w\mathbf{H}_1, \nabla \varphi \rangle\,dx +\lim_{n \rightarrow \infty} \left(\int_{B_r(x_0)} \langle \mathbf{H}_2, \nabla (w_n\varphi) \rangle\,dx - \int_{B_r(x_0)} \langle \mathbf{H}_2, \varphi \nabla w_n \rangle\,dx \right) \\
&=\int_{B_r(x_0)}\langle w\mathbf{H}_1, \nabla \varphi \rangle\,dx +\lim_{n \rightarrow \infty} \left(-\int_{B_r(x_0)} \tilde{h} w_n\varphi \,dx - \int_{B_r(x_0)} \langle \mathbf{H}_2, \varphi \nabla w_n \rangle\,dx \right) \\
&=\int_{B_r(x_0)}\langle w\mathbf{H}_1, \nabla \varphi \rangle\,dx- \int_{B_r(x_0)} \langle \mathbf{H}_2, \nabla w \rangle \varphi \,dx -\int_{B_r(x_0)} \tilde{h}w\varphi \,dx.
\end{align*}
Thus, \eqref{infinvahbilin2} implies that
\begin{equation} \label{infintransform}
\int_{B_{r}(x_0)} \langle A^T \nabla w  + w \mathbf{H}_1, \nabla \varphi \rangle dx  - \int_{B_r(x_0)} \left( \langle \mathbf{H}_2, \nabla w \rangle +\tilde{h} w \right) \varphi \,dx
= 0 \quad \text{ for all $\varphi \in H_0^{1,2}(B_{r}(x_0))$}.
\end{equation}
Since $\mathbf{H}_1 \in L^p(B_r(x_0), \mathbb{R}^d)$, $\mathbf{H}_2 \in L^d(B_r(x_0), \mathbb{R}^d)$ and $\tilde{h} \in L^{\tilde{q}}(B_r(x_0))$ with $\tilde{q} \in (\frac{d}{2}, \infty)$, it follows by \cite[Théorème 7.2]{S65} that $w$ has a continuous version in $B_r(x_0)$, say again $w \in H^{1,2}(B_r(x_0)) \cap C(B_r(x_0))$ (indeed, $w$ has a locally H\"{o}lder continuous version in $B_r(x_0)$). Moreover, it follows from \eqref{wpositivcon} that $w(x) \geq 0$ for all $x \in B_r(x_0)$. \\ \\
{\underline{\bf Claim}:}  $w(x) > 0$ for every $x \in B_{r}(x_0)$.  \\
To show the claim, we proceed by contradiction. Suppose there exists $y_0 \in B_{r}(x_0)$ such that $w(y_0) = 0$.  
Then, applying the Harnack inequality (see \cite[Théorème 8.1]{S65}) to \eqref{infintransform}, we deduce that $w$ must vanish identically on $B_R(x_0)$ for all $R \in (\|y_0 - x_0\|, r)$.  
Given that $R$ is arbitrary, it follows that $w = 0$ on $B_{r}(x_0)$, which implies $\mathcal{T}(w) = 0$ on $L^2(\partial B_r(x_0))$.
This, however, contradicts \eqref{tracewone}. Therefore, we conclude that our claim holds. \\
Let $x_1 \in U$. Since $w(x_1) > 0$, we define the normalized function $\rho \in H^{1,2}(B_r(x_0)) \cap C(B_r(x_0))$ by
\[
\rho(x) := \frac{1}{w(x_1)} w(x), \quad x \in B_{r}(x_0).
\]
Thus, \eqref{infinva} is fulfilled by \eqref{infinvahbilin2}. \\ \\
(ii)
Observe that by \eqref{infintransform},
\begin{equation} \label{infinvaharnack}
\int_{B_{r}(x_0)} \langle A^T \nabla \rho  + \rho \mathbf{H}_1, \nabla \varphi \rangle dx  - \int_{B_r(x_0)} \left( \langle \mathbf{H}_2, \nabla \rho \rangle +\tilde{h} \rho \right) \varphi \,dx
= 0 \quad \text{ for all $\varphi \in H_0^{1,2}(B_{r}(x_0))$}.
\end{equation}
Since $\rho(x_1)=1$, by applying the Harnack inequality (\cite[Théorème 8.1]{S65}) to \eqref{infinvaharnack}, the assertion follows.
\end{proof}

\begin{rem} \label{remcounter}
Whether the conclusion of Theorem \ref{existrho} can be derived under the assumption of {\bf (T1)} remains an open question. However, at the very least, our current proof method for Theorem \ref{existrho} is not sufficient to establish the result under assumption {\bf (T1)}.
The main difficulty arises from the fact that the assumption $\mathbf{H} \in L^d(B_r(x_0), \mathbb{R}^d)$ does not allow the solution to be locally bounded. To illustrate this point, consider $d \geq 3$ and the function
\begin{equation} \label{ourwfun}
w(x) := \frac{1}{\ln 2} \ln \left(1 + \frac{1}{\|x\|}\right), \quad x \in B_1:=\{ x \in \mathbb{R}^d : \|x\|<1 \}.
\end{equation}
Then, $w(x) >0$ for all $x \in B_1 \setminus \{ 0\}$, and $w \in H^{1,2}(B_1) \cap C(B_1 \setminus \{0 \})$.  
Moreover, we have
\[
\mathcal{T}(w) = 1 \quad \text{in } L^2(\partial B_1),
\]
where $\mathcal{T}: H^{1,2}_0(B_1) \rightarrow L^2(\partial B_1)$ is the trace operator as in \cite[Theorem 4.6]{EG15}.
Now define the vector field $\mathbf{H}: B_1 \to \mathbb{R}^d$ by
\[
\mathbf{H}(x) := -\nabla \ln w(x), \quad x \in B_1.
\]
Then, $\mathbf{H} \in L^d(B_1, \mathbb{R}^d)$, but $\mathbf{H} \notin \bigcup_{p\in (d, \infty)} L^p(B_1, \mathbb{R}^d)$.
Direct computation shows that $w$ satisfies \eqref{infinvahbilin2} with $B_r(x_0)$ replaced by $B_1$, and that $w$ is in fact the unique function satisfying both $\mathcal{T}(w) = 1$ and \eqref{infinvahbilin2}.
However, the function $w$ defined in \eqref{ourwfun} does not admit a locally bounded version in $B_1$.  
This demonstrates that the local boundedness of the solution cannot, in general, be deduced under the sole assumption $\mathbf{H} \in L^d(B_1, \mathbb{R}^d)$.
(If $\mathbf{H} \in L^p(B_1, \mathbb{R}^d)$ with $p \in (d, \infty)$, then the local boundedness of a solution follows by \cite[Theorem 5.1]{T73}. Indeed, one can check that $\operatorname{div}\mathbf{H} \in L^{\frac{d}{2}}(B_1)$, but $\operatorname{div}\mathbf{H} \notin \bigcup _{p \in (\frac{d}{2}, \infty)}L^{p}(B_1)$. Therefore, to obtain the local boundedness of solutions in case of $\mathbf{H} \in L^d(B_1,\mathbb{R}^d)$, the condition {\bf (T)} regarding $\mathbf{H}$ is essential.
\end{rem}
\centerline{}
Below, we present the core method of this paper, which transforms a general vector field into a divergence-free vector field. We shall refer to this as the divergence-free transformation. In particular, we present here a simplified form of \cite[Theorem 3.2]{L25jm}.
\begin{theo}{\bf (Divergence-free transformation)} \label{divfretransform}
Assume that {\bf (T)} holds. Let $\rho \in H^{1,2}(U) \cap C(\overline{U})$ be a strictly positive function on $\overline{U}$ constructed as in Theorem~\ref{existrho}. Define the vector field
\begin{equation} \label{divfreebdef}
\mathbf{B} := \mathbf{H} + \frac{1}{\rho} A^T \nabla \rho \quad \text{in } U.
\end{equation}
Then $\rho \mathbf{B} \in L^2(U, \mathbb{R}^d)$ and satisfies
\begin{equation} \label{divfeepropbolb}
\int_U \langle \rho \mathbf{B}, \nabla \varphi \rangle \, dx = 0 \quad \text{for all } \varphi \in C_0^\infty(U).
\end{equation}
Let $f \in L^1(U)$, and $u \in H^{1,2}_0(U)$ with $cu \in L^1(U)$. Then the following two statements are equivalent:
\begin{itemize}
  \item[(i)] The function $u$ satisfies \eqref{maineqfir}.
  \item[(ii)] The function $u$ satisfies
  \begin{equation*}
  \int_U \langle \rho A \nabla u, \nabla \varphi \rangle + \langle \rho \mathbf{B}, \nabla u \rangle \varphi + \rho c u \varphi \, dx = \int_U \rho f \varphi\, dx \quad \text{for all } \varphi \in C_0^\infty(U).
  \end{equation*}
\end{itemize}
In other words, $u$ is a weak solution to \eqref{maineq2}, if and only if $u$ is a weak solution to \eqref{maineq3}.
\end{theo}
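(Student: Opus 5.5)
The plan is a direct computation with weak formulations. We use the identity \eqref{infinva} from Theorem \ref{existrho}(i), the positivity and continuity of $\rho$ on $\overline{U}$ from Theorem \ref{existrho}(ii), and a product-rule manipulation: the test function $\varphi$ in \eqref{maineqfir} is replaced by $\rho\varphi$, and conversely $\varphi$ in (ii) is replaced by $\varphi/\rho$.

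\textbf{Step 1: $\rho\mathbf{B}\in L^2$ and it is divergence-free.} By definition $\rho\mathbf{B}=\rho\mathbf{H}+A^T\nabla\rho$. The term $A^T\nabla\rho$ lies in $L^2(U,\mathbb{R}^d)$ because $\rho\in H^{1,2}$ and $A$ is bounded. The term $\rho\mathbf{H}$ lies in $L^2$ because $\rho$ is bounded on $\overline{U}$ by Theorem \ref{existrho}(ii) and $\mathbf{H}\in L^d\subset L^2(U)$, as $U$ is bounded and $d\ge 3$. Identity \eqref{divfeepropbolb} is then exactly \eqref{infinva} restricted to test functions in $C_0^\infty(U)\subset C_0^\infty(B_r(x_0))$.

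\textbf{Step 2: the product-rule identity.} Fix $u\in H_0^{1,2}(U)$ and $\varphi\in C_0^\infty(U)$. We want to test \eqref{infinva} with $u\varphi$. This function has compact support in $U$ and lies in $H^{1,2}_0$, so it can be approximated by $C_0^\infty$ functions supported in a fixed compact set $K\subset U$. On $K$ we have $\rho\mathbf{H}\in L^2$ and $A^T\nabla\rho\in L^2$, so \eqref{infinva} extends to $\psi\in H_0^{1,2}$ with compact support in $U$. Testing with $\psi=u\varphi$ gives
\[
\int_U \langle\rho\mathbf{B},\nabla u\rangle\varphi\,dx=-\int_U\langle\rho\mathbf{B},\nabla\varphi\rangle u\,dx .
\]
Then expand $\langle A\nabla u,\nabla(\rho\varphi)\rangle=\rho\langle A\nabla u,\nabla\varphi\rangle+\varphi\langle \nabla u,A^T\nabla\rho\rangle$. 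Combining these, left side of (i) tested with $\rho\varphi$ equals the left side of (ii) tested with $\varphi$, and likewise $\int_U f\rho\varphi\,dx$ matches $\int_U\rho f\varphi\,dx$. The integrability of the zero-order terms is clear since $cu\in L^1$ and $\rho$ is bounded.

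\textbf{Step 3: admissible test functions.} The remaining issue, and the main technical obstacle, is that $\rho\varphi$ and $\varphi/\rho$ are only in $H^{1,2}\cap C_c(U)$, not $C_0^\infty(U)$. The terms $\int cu\psi\,dx$ and $\int f\psi\,dx$ require $\psi\in L^\infty$, while $\int\langle\mathbf{H},\nabla u\rangle\psi\,dx$ requires $\psi$ in $L^\infty$ (or at least in $L^{2d/(d-2)}$). I would extend \eqref{maineqfir} (respectively (ii)) to test functions $\psi\in H^{1,2}_0(U)\cap L^\infty(U)$ with compact support in $U$ by mollification, $\psi_\varepsilon=\psi*\eta_\varepsilon$. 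These satisfy $\psi_\varepsilon\to\psi$ in $H^{1,2}$, $\|\psi_\varepsilon\|_\infty\le\|\psi\|_\infty$, and, after passing to a subsequence, $\psi_\varepsilon\to\psi$ a.e. The supports stay in a fixed compact subset of $U$. The gradient terms then converge by $L^2$ convergence. The terms $\langle\mathbf{H},\nabla u\rangle\psi_\varepsilon$, $cu\psi_\varepsilon$ and $f\psi_\varepsilon$ converge by dominated convergence with dominating functions $\|\psi\|_\infty|\langle\mathbf{H},\nabla u\rangle|$, $\|\psi\|_\infty|cu|$ and $\|\psi\|_\infty|f|$, all in $L^1$. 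For the $\mathbf{H}$ term we use $\mathbf{H}\in L^2$ and $\nabla u\in L^2$ on $U$.

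For the direction (ii)$\Rightarrow$(i) we also need $1/\rho$ to be bounded and $H^{1,2}$ near $\operatorname{supp}\varphi$. This follows since $\rho\ge \min_{\overline U}\rho>0$, with the chain rule $\nabla(1/\rho)=-\rho^{-2}\nabla\rho$. Finally, testing (i) with $\rho\varphi$ yields (ii), and testing (ii) with $\varphi/\rho$ yields (i) by the same algebra, so the two statements are equivalent.
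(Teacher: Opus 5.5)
Your proposal is correct and takes the same approach as the paper, which defers to the direct computation of \cite[Theorem 3.2]{L25jm}: extend both weak formulations by mollification to bounded, compactly supported $H^{1,2}$ test functions, then substitute $\rho\varphi$ for (i)$\Rightarrow$(ii) and $\varphi/\rho$ for (ii)$\Rightarrow$(i). The integration-by-parts identity in your Step 2 is true but not needed, because $\langle A\nabla u,\nabla(\rho\varphi)\rangle+\langle\mathbf{H},\nabla u\rangle\rho\varphi=\langle\rho A\nabla u,\nabla\varphi\rangle+\langle\rho\mathbf{B},\nabla u\rangle\varphi$ holds pointwise, so \eqref{divfeepropbolb} is only needed for its own assertion and plays no role in the equivalence.
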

\begin{proof}
The proof is identical to that of \cite[Theorem 3.2]{L25jm} in the case where $\mathbf{F} = 0$.
\end{proof}
\centerline{}
The following two lemmas, which play a supporting role in the proof of the main result, are adapted from \cite{L25jm} and \cite{L24}, respectively.
\begin{lem} \label{theomaineun}
Assume $d \geq 3$. Let $\hat{\lambda}>0$ be a constant, and let $\hat{A} = (\hat{a}_{ij})_{1 \leq i,j \leq d}$ be a matrix of bounded and measurable functions on $\mathbb{R}^d$ such that
\begin{equation} \label{ellipticihata}
\langle \hat{A}(x) \xi, \xi \rangle \geq \hat{\lambda} \| \xi \|^2 \quad \text{for a.e. } x \in \mathbb{R}^d \text{ and all } \xi \in \mathbb{R}^d.
\end{equation}
Let $\hat{\mathbf{B}} \in L^2(U, \mathbb{R}^d)$ be a vector field satisfying
\begin{equation} \label{weakdivfree}
\int_U \langle \hat{\mathbf{B}}, \nabla \varphi \rangle \, dx = 0 \quad \text{for all } \varphi \in C_0^\infty(U).
\end{equation}
Let $\hat{c} \in L^1(U)$ with $\hat{c} \geq 0$, and let $\hat{f} \in L^q(U)$ for some $q \in \left(\frac{d}{2}, \infty\right)$. Then, the following statements hold:
\begin{itemize}
\item[(i)] There exists a weak solution $\hat{u} \in H_0^{1,2}(U) \cap L^{\infty}(U)$ to
\begin{equation} \label{mainenerbd_nodiv}
\left\{
\begin{alignedat}{2}
- \operatorname{div}(\hat{A} \nabla \hat{u}) + \langle \hat{\mathbf{B}}, \nabla \hat{u} \rangle + \hat{c} \hat{u} &= \hat{f} && \quad \text{in } U, \\
\hat{u} &= 0 && \quad \text{on } \partial U,
\end{alignedat}
\right.
\end{equation}
i.e., $\hat{u} \in H_0^{1,2}(U)$ with $\hat{c} \hat{u} \in L^1(U)$ satisfies
\begin{equation*} 
\int_U \langle \hat{A} \nabla \hat{u}, \nabla \varphi \rangle + \langle \hat{\mathbf{B}}, \nabla \hat{u} \rangle \varphi + \hat{c} \hat{u} \varphi \, dx = \int_U \hat{f} \varphi \, dx \quad \text{for all } \varphi \in C_0^\infty(U).
\end{equation*}
Moreover, the following estimates hold:
\begin{align}
\|\hat{u}\|_{H_0^{1,2}(U)} &\leq \hat{K}_3 \| \hat{f} \|_{L^{\frac{2d}{d+2}}(U)}, \label{estimenbd_nodiv} \\
\|\hat{u}\|_{L^{\infty}(U)} &\leq \hat{K}_4 \| \hat{f} \|_{L^q(U)}, \label{bddestim_nodiv}
\end{align}
where $\hat{K}_3 > 0$ depends only on $d$, $\hat{\lambda}$, and $|U|$, and $\hat{K}_4 > 0$ depends only on $d$, $\hat{\lambda}$, $q$, and $|U|$.

\item[(ii)] Let $\hat{v} \in H_0^{1,2}(U)$ with $\hat{c} \hat{v} \in L^1(U)$ be such that
\begin{equation*} 
\int_U \langle \hat{A} \nabla \hat{v}, \nabla \varphi \rangle + \langle \hat{\mathbf{B}}, \nabla \hat{v} \rangle \varphi + \hat{c} \hat{v} \varphi \, dx = 0 \quad \text{for all } \varphi \in C_0^\infty(U).
\end{equation*}
Then $\hat{v} = 0$ in $U$. In particular, the solution $\hat{u}$ in (i) is unique.

\item[(iii)] Let $\alpha > 0$ and $\theta \in [1, \infty]$, and assume that $\hat{c} \geq \alpha$ and $\hat{f} \in L^\theta(U) \cap L^q(U)$. Then $\hat{u}$ in (i) satisfies
\begin{equation} \label{lrcontrapro}
\|\hat{u}\|_{L^\theta(U)} \leq \frac{1}{\alpha} \|\hat{f}\|_{L^\theta(U)}.
\end{equation}
\end{itemize}
\end{lem}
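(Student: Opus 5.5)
The plan is to treat first the case $\hat c\in L^\infty(U)$ and then pass to $\hat c\in L^1(U)$ by truncation, with two structural facts doing most of the work. The first is the divergence-free identity $\int_U\langle\hat{\mathbf B},\nabla\phi(v)\rangle\,dx=0$ for $v\in H^{1,2}_0(U)\cap L^\infty(U)$ and Lipschitz $\phi$ with $\phi(0)=0$. Equivalently, $\int_U\langle\hat{\mathbf B},\nabla v\rangle\phi'(v)\,dx=0$, so the drift drops out of every test-function computation. This identity must be justified by approximation, since $\hat{\mathbf B}$ is only in $L^2$: I will prove it first for $v\in C_0^\infty(U)$ and then pass to the limit using boundedness. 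The second fact is that \eqref{weakdivfree} makes $\int_U\langle\hat{\mathbf B},\nabla u\rangle u\,dx=0$. The expression $\langle\hat{\mathbf B},\nabla u\rangle u$ is only meaningful, however, when $u$ is bounded, since $\hat{\mathbf B}\in L^2$ only. This forces everything to be done first for bounded data and bounded solutions.

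\emph{Step 1 (approximate problems).} Set $\hat c_n:=\hat c\wedge n$, $\hat f_n:=(\hat f\wedge n)\vee(-n)$, and replace $\hat{\mathbf B}$ by $\hat{\mathbf B}_n:=\hat{\mathbf B}\,\mathbf 1_{\{\|\hat{\mathbf B}\|\le n\}}$ only if needed. Since truncating would destroy the divergence-free property, I would rather keep $\hat{\mathbf B}$ and regularize the equation by a fixed point on bounded functions. The alternative is to mollify $\hat{\mathbf B}$ in a way that preserves the divergence, which is possible only on $\mathbb R^d$ and not on $U$. A cleaner route is to solve the problem with $\hat{\mathbf B}$ replaced by bounded divergence-free approximations $\hat{\mathbf B}_k\to\hat{\mathbf B}$ in $L^2(U)$. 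These I would obtain by Helmholtz projection of truncations onto the $L^2$ closure of $\{\mathbf G : \int\langle\mathbf G,\nabla\varphi\rangle=0\}$, accepting some loss of boundedness. At this point I would instead rely on \cite{L25jm}, where Lemma \ref{theomaineun} is stated to be adapted from; the key features to verify are listed below.

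\emph{Step 2 (a priori bounds independent of $\hat{\mathbf B},\hat c$).} Testing with $u$ and using the divergence-free identity together with $\hat c\ge0$ gives $\hat\lambda\|\nabla u\|_2^2\le\|\hat f\|_{2d/(d+2)}\|u\|_{2d/(d-2)}$. Sobolev's inequality \eqref{imptsobolmud} then yields \eqref{estimenbd_nodiv} with $\hat K_3$ depending only on $d$ and $\hat\lambda$. For \eqref{bddestim_nodiv} I will run Stampacchia's truncation, testing with $(u-k)^+$ and $(u+k)^-$. The drift term vanishes again because $\phi(t)=(t-k)^+$ is Lipschitz with $\phi(0)=0$ for $k\ge0$, and the $\hat c$ term has a good sign. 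The standard level-set iteration with $q>d/2$ then gives a bound depending on $d,\hat\lambda,q,|U|$.

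\emph{Step 3 (passage to the limit).} Using the uniform bounds, I extract a subsequence with $u_n\rightharpoonup\hat u$ weakly in $H^{1,2}_0$ and weak-$*$ in $L^\infty$, and with a.e.\ convergence. The product $\langle\hat{\mathbf B}_n,\nabla u_n\rangle\varphi$ is handled by rewriting it as $-\langle\hat{\mathbf B}_n,\nabla\varphi\rangle u_n$ via the divergence-free identity; this converges since $u_n\to\hat u$ in $L^2$ and is bounded. The term $\hat c_nu_n\to\hat c\hat u$ in $L^1$ follows by dominated convergence, since $|\hat c_nu_n|\le\hat K_4\|\hat f\|_q\hat c$.

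\emph{Step 4 (uniqueness, (ii)).} This is the main obstacle, because $\hat v$ is not assumed bounded and $\hat c\hat v\in L^1$ only. I would test with $T_k(\hat v)=(\hat v\wedge k)\vee(-k)$, which is admissible by density since it is bounded and lies in $H^{1,2}_0$. The drift term vanishes by the identity with $\phi=T_k$, and $\hat c\hat v\,T_k(\hat v)\ge0$. This leaves $\hat\lambda\|\nabla T_k\hat v\|_2^2\le0$, hence $T_k\hat v=0$ for every $k$, and so $\hat v=0$.

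\emph{Step 5 (contraction, (iii)).} I test with $|\hat u|^{\theta-2}\hat u$, truncated near $0$ as in Lemma \ref{basicapplem}. The second-order term is nonnegative and the drift vanishes, so $\alpha\|\hat u\|_\theta^\theta\le\|\hat f\|_\theta\|\hat u\|_\theta^{\theta-1}$ by Hölder. The case $\theta=\infty$ follows by letting $\theta\to\infty$, and $\theta=1$ by a sign test ($\phi_\varepsilon(t)=t/(|t|+\varepsilon)$).
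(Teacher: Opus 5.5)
\textbf{Verdict: genuine gap in the existence part of (i).} The paper's own proof of this lemma is only the citation of \cite[Theorem 3.3]{L25jm} with $\mathbf F=0$. Where you fall back on \cite{L25jm} you therefore coincide with the paper. As a self-contained argument, however, your proposal leaves existence unproved. Step 1 never produces approximate problems, and each candidate you list fails for the reason you give yourself:
truncating $\hat{\mathbf B}$ destroys the cancellation;
the mollified zero extension is not divergence-free in $U$;
the Helmholtz projection of truncations gives up boundedness, and nothing guarantees even $\hat{\mathbf B}_k\in L^d(U,\mathbb{R}^d)$ on a general bounded open $U$.
$L^d$ is the minimum needed for your Lax--Milgram step and for the Stampacchia test with $(u-k)^+$ to make sense. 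So Steps 2--3 are estimates and a limit passage for solutions that have not been shown to exist. Steps 4 and 5, by contrast, are sound and give elementary proofs of (ii) and (iii) once (i) is available.

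\textbf{How to close it.} Keep $\hat{\mathbf B}$ fixed and truncate the unknown in the conservative form of the drift. With $\hat c_n:=\hat c\wedge n$ and $T_m(t):=(t\wedge m)\vee(-m)$, seek $u\in H^{1,2}_0(U)$ with
\[
\int_U\langle\hat A\nabla u,\nabla\varphi\rangle-T_m(u)\langle\hat{\mathbf B},\nabla\varphi\rangle+\hat c_n u\varphi\,dx=\int_U\hat f\varphi\,dx\quad\text{for all }\varphi\in H^{1,2}_0(U).
\]
Since $\|T_m(w)\hat{\mathbf B}\|_{L^2(U)}\le m\|\hat{\mathbf B}\|_{L^2(U)}$, the Lax--Milgram map $w\mapsto u$ (with $T_m(w)$ in the drift) sends $L^2(U)$ into a fixed bounded subset of $H^{1,2}_0(U)$. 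It is $L^2$-continuous by dominated convergence, so Schauder's theorem yields a solution.

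Now test with $(u-k)^+$, $k\ge0$. The drift term equals $-\int_U\langle\hat{\mathbf B},\nabla\Psi(u)\rangle\,dx$ with $\Psi(t):=\int_0^tT_m(s)1_{(k,\infty)}(s)\,ds$, which is Lipschitz with $\Psi(0)=0$. This term vanishes because $w\mapsto\int_U\langle\hat{\mathbf B},\nabla w\rangle\,dx$ is continuous on $H^{1,2}_0(U)$ and zero on $C_0^\infty(U)$, hence zero on all of $H^{1,2}_0(U)$. Thus $\|u\|_{L^\infty(U)}\le\hat K_4\|\hat f\|_{L^q(U)}$ uniformly in $m$ and $n$. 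Taking $m>\hat K_4\|\hat f\|_{L^q(U)}$ removes the truncation, and your Step 3, now with $\hat{\mathbf B}$ fixed, passes $n\to\infty$.

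\textbf{A smaller slip.} In Steps 2 and 4 you apply your identity to the wrong function. The drift terms $\int_U\langle\hat{\mathbf B},\nabla u\rangle(u-k)^+dx$ and $\int_U\langle\hat{\mathbf B},\nabla\hat v\rangle T_k(\hat v)\,dx$ require $\phi'(t)=(t-k)^+$ and $\phi'=T_k$, i.e.\ $\phi(t)=\frac12((t-k)^+)^2$ and $\phi(t)=\int_0^tT_k(s)\,ds$. The latter $\phi$ is globally Lipschitz, so by the same density remark Step 4 needs no boundedness of $\hat v$.
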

\begin{proof}
The assertion follows from \cite[Theorem 3.3]{L25jm} in the case where $\mathbf{F} = 0$.
\end{proof}

\begin{lem} \label{amsprocethm}
Assume $d \geq 3$. Let $\hat{\lambda}>0$ be a constant, and let $\hat{A} = (\hat{a}_{ij})_{1 \leq i,j \leq d}$ be a matrix of bounded and measurable functions on $\mathbb{R}^d$ satisfying \eqref{ellipticihata}. Let $\hat{\mathbf{B}} \in L^2(U, \mathbb{R}^d)$ be a vector field satisfying \eqref{weakdivfree}. Let $\hat{c} \in L^{\frac{2d}{d+2}}(U)$ with $\hat{c} \geq 0$, and let $\hat{f} \in L^{\frac{2d}{d+2}}(U)$. Then the following statements hold:
\begin{itemize}
\item[(i)]
There exists a unique solution $\hat{u}\in H^{1,2}_0(U)$ to \eqref{mainenerbd_nodiv}, and $\hat{u}$ satisfies the estimate \eqref{estimenbd_nodiv}.
\item[(ii)]
Let $\alpha > 0$ and $\theta \in [1, \infty]$, and assume that $\hat{c} \geq \alpha$ and $\hat{f} \in L^\theta(U) \cap L^{\frac{2d}{d+2}}(U)$. Then the solution $\hat{u}$ in (i) satisfies \eqref{lrcontrapro}.
\end{itemize}
\end{lem}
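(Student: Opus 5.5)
The plan is to reduce the statement to Lemma \ref{theomaineun} by truncating the data, and then to pass to the limit using the uniform energy bound \eqref{estimenbd_nodiv}. Uniqueness should follow directly from Lemma \ref{theomaineun}(ii).

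\textbf{Approximation.} For $n \geq 1$ set $\hat{c}_n := \min(\hat{c}, n)$ and $\hat{f}_n := \max(\min(\hat{f}, n), -n)$. Then $0 \le \hat{c}_n \in L^\infty(U) \subset L^1(U)$ and $\hat{f}_n \in L^\infty(U) \subset L^q(U)$ for any fixed $q \in (\frac d2, \infty)$. Moreover $|\hat{f}_n| \le |\hat{f}|$, and $\hat{c}_n \to \hat{c}$, $\hat{f}_n \to \hat{f}$ in $L^{\frac{2d}{d+2}}(U)$ by dominated convergence.

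Lemma \ref{theomaineun}(i) then yields $\hat{u}_n \in H^{1,2}_0(U) \cap L^\infty(U)$ solving \eqref{mainenerbd_nodiv} with $(\hat{c}_n, \hat{f}_n)$. The constant $\hat{K}_3$ depends only on $d$, $\hat\lambda$, $|U|$, so
\[
\|\hat{u}_n\|_{H^{1,2}_0(U)} \le \hat{K}_3 \|\hat{f}_n\|_{L^{\frac{2d}{d+2}}(U)} \le \hat{K}_3 \|\hat{f}\|_{L^{\frac{2d}{d+2}}(U)} .
\]
By weak compactness, Rellich--Kondrachov and the Sobolev inequality \eqref{imptsobolmud}, a subsequence satisfies $\hat{u}_n \to \hat{u}$ weakly in $H^{1,2}_0(U)$, weakly in $L^{\frac{2d}{d-2}}(U)$, strongly in $L^2(U)$, and a.e. in $U$. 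The estimate \eqref{estimenbd_nodiv} for $\hat{u}$ follows by weak lower semicontinuity of the norm.

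\textbf{Passing to the limit.} Fix $\varphi \in C_0^\infty(U)$. The diffusion term converges because $\hat{A}^T\nabla\varphi \in L^2$ and $\nabla \hat{u}_n \rightharpoonup \nabla\hat{u}$ weakly in $L^2$. The drift term converges because $\hat{\mathbf{B}}\varphi \in L^2(U,\mathbb{R}^d)$ and $\nabla\hat{u}_n \rightharpoonup \nabla\hat{u}$ weakly in $L^2$.

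The zero-order term is the only delicate point, since the exponents $\frac{d+2}{2d} + \frac{d-2}{2d} = 1$ leave no room for strong convergence. I would split it as
\[
\int_U \hat{c}_n \hat{u}_n \varphi \,dx = \int_U (\hat{c}_n - \hat{c}) \hat{u}_n \varphi \,dx + \int_U \hat{c} \hat{u}_n \varphi \,dx .
\]
The first integral is bounded by $\|\hat{c}_n - \hat{c}\|_{L^{\frac{2d}{d+2}}}\,\|\hat{u}_n\|_{L^{\frac{2d}{d-2}}}\,\|\varphi\|_\infty \to 0$, using the uniform bound on $\hat{u}_n$. The second converges to $\int_U \hat{c}\hat{u}\varphi\,dx$ because $\hat{c}\varphi \in L^{\frac{2d}{d+2}}(U) = (L^{\frac{2d}{d-2}}(U))'$ and $\hat{u}_n$ converges weakly in $L^{\frac{2d}{d-2}}(U)$.

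The right-hand side $\int_U \hat{f}_n \varphi \,dx$ converges trivially. Finally, $\hat{c}\hat{u} \in L^1(U)$ by H\"older's inequality together with \eqref{imptsobolmud}, so $\hat{u}$ is a weak solution of \eqref{mainenerbd_nodiv}.

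\textbf{Uniqueness.} Since $U$ is bounded, $\hat{c} \in L^{\frac{2d}{d+2}}(U) \subset L^1(U)$ with $\hat{c} \ge 0$. The difference $\hat{v}$ of two solutions lies in $H^{1,2}_0(U)$, satisfies $\hat{c}\hat{v} \in L^1(U)$, and solves the homogeneous equation. Lemma \ref{theomaineun}(ii) therefore gives $\hat{v} = 0$. In particular the whole sequence $(\hat{u}_n)$ converges to $\hat{u}$, not only a subsequence.

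\textbf{Contraction (ii).} If $\hat{c} \ge \alpha$, then $\hat{c}_n \ge \alpha$ for every $n \ge \alpha$. Also $\hat{f}_n \in L^\theta(U) \cap L^q(U)$ with $\|\hat{f}_n\|_{L^\theta} \le \|\hat{f}\|_{L^\theta}$. Lemma \ref{theomaineun}(iii) gives $\|\hat{u}_n\|_{L^\theta} \le \frac1\alpha \|\hat{f}\|_{L^\theta}$. Since $\hat{u}_n \to \hat{u}$ a.e., Fatou's lemma (for $\theta < \infty$) or the a.e. limit directly (for $\theta = \infty$) yields \eqref{lrcontrapro}.

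The step I expect to be the main obstacle is the convergence of the zero-order term $\hat{c}_n\hat{u}_n\varphi$ at the critical H\"older pairing. The splitting above, which pairs strong convergence of $\hat{c}_n$ with weak convergence of $\hat{u}_n$ in the dual space, should resolve it.
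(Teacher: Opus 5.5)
Your proof is correct, but it follows a different route from the paper. The paper's proof is a bare citation of \cite[Theorem 1.1(i),(ii)]{L24}, a result which Section~\ref{sec5} cites for $L^2$-drifts with nonnegative divergence; the divergence-free case is a special instance. You instead obtain the lemma as a corollary of Lemma~\ref{theomaineun}, and each step checks out:
\begin{itemize}
\item Truncating $\hat c$ and $\hat f$ places you in the $L^1$/$L^q$ setting of Lemma~\ref{theomaineun}.
\item The bound \eqref{estimenbd_nodiv} is uniform in $n$ precisely because $\hat K_3$ depends only on $d$, $\hat\lambda$, $|U|$, and not on $\hat c$ or $q$.
\item You handle the critical zero-order term correctly, pairing the strong convergence $\hat c_n \to \hat c$ in $L^{\frac{2d}{d+2}}(U)$ with the uniform $L^{\frac{2d}{d-2}}$-bound and weak convergence of $\hat u_n$.
\item Uniqueness is inherited from Lemma~\ref{theomaineun}(ii) via $L^{\frac{2d}{d+2}}(U)\subset L^1(U)$.
\item The contraction estimate passes to the limit by Fatou's lemma.
\end{itemize}
Your route buys economy. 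Part (iii) of Theorem~\ref{mainexisthm} then rests on the same single external input \cite[Theorem 3.3]{L25jm} as parts (i)--(ii). It also shows that the genuinely delicate step at this integrability level is already contained in the $L^1$-theory. That step is uniqueness, where one cannot test with $\hat v$ because $\langle \hat{\mathbf B}, \nabla \hat v\rangle \hat v$ need not be integrable. The paper's citation draws on a more general theorem whose extra generality (drifts that are not divergence-free) is not needed here, at the price of a second external dependence.
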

\begin{proof}
(i) The existence and uniqueness of the solution $\hat{u}$ to \eqref{mainenerbd_nodiv}, as well as the estimate \eqref{estimenbd_nodiv}, follow from \cite[Theorem 1.1(i)]{L24}. \\
(ii) The assertion follows from \cite[Theorem 1.1(ii)]{L24}.
\end{proof}
\centerline{}
Now, we present the proof of the main result stated in the Introduction.\\ \\
\noindent
{\bf Proof of Theorem \ref{mainexisthm}}\\
(i)  
Let $\rho \in H^{1,2}(U) \cap C(\overline{U})$ be a strictly positive function on $\overline{U}$ constructed as in Theorem~\ref{existrho}, and define the vector field $\mathbf{B}$ as in \eqref{divfreebdef}. Then \eqref{divfeepropbolb} is satisfied. Let $v \in H_0^{1,2}(U)$ with $cv \in L^1(U)$ be such that \eqref{weakdzero} holds.
By Theorem~\ref{divfretransform}, we obtain
\begin{equation*} 
\int_U \langle \rho A \nabla v, \nabla \varphi \rangle + \langle \rho \mathbf{B}, \nabla v \rangle \varphi + \rho c v \varphi \, dx = 0 \quad \text{for all } \varphi \in C_0^\infty(U).
\end{equation*}
Then, by Lemma~\ref{theomaineun}(ii), it follows that $v = 0$ in $U$. \\ 
(ii)  
Let $f \in L^q(U)$ for some $q \in (\frac{d}{2}, \infty)$. By Lemma~\ref{theomaineun}(i), there exists a unique function $u \in H_0^{1,2}(U) \cap L^{\infty}(U)$ satisfying
\begin{equation} \label{maineqfdivzesolu}
\int_U \langle \rho A \nabla u, \nabla \varphi \rangle + \langle \rho \mathbf{B}, \nabla u \rangle \varphi + \rho c u \varphi \, dx = \int_U \rho f \varphi \, dx \quad \text{for all } \varphi \in C_0^\infty(U),
\end{equation}
and \eqref{estimenbd_nodiv}, \eqref{bddestim_nodiv} and Theorem \ref{existrho}(ii)
imply
\begin{align}
\|u\|_{H_0^{1,2}(U)} &\leq K_3 \|f \rho\|_{L^{\frac{2d}{d+2}}(U)} \leq K_3 \max_{\overline{U}} \rho \cdot \|f\|_{L^{\frac{2d}{d+2}}(U)} \leq K_3 \tilde{K}_1 \|f\|_{L^{\frac{2d}{d+2}}(U)}, \label{energyestimgen} \\
\|u\|_{L^\infty(U)} &\leq K_4 \|f \rho\|_{L^q(U)} \leq K_4 \max_{\overline{U}} \rho \cdot \|f\|_{L^q(U)} \leq K_4 \tilde{K}_1 \|f\|_{L^q(U)}. \nonumber
\end{align}
By Theorem~\ref{divfretransform}, $u$ is a weak solution to \eqref{maineq2}, so that \eqref{estimenbd2}  and \eqref{bddestmizte} follow.
The uniqueness follows from (i). 
Note that since $\rho c \geq \alpha \min_{\overline{U}} \rho > 0$, we may apply Lemma~\ref{theomaineun}(iii) to \eqref{maineqfdivzesolu} to obtain
\[
\|u\|_{L^\theta(U)} \leq \frac{1}{\alpha \min_{\overline{U}} \rho} \|\rho f\|_{L^\theta(U)} 
\leq \frac{\max_{\overline{U}} \rho}{\alpha \min_{\overline{U}} \rho} \|f\|_{L^\theta(U)} 
\leq \frac{\tilde{K}_1}{\alpha} \|f\|_{L^\theta(U)}.
\]
\\
(iii)  
By Lemma~\ref{amsprocethm}(i), there exists $u \in H_0^{1,2}(U)$ satisfying both \eqref{maineqfdivzesolu} and the estimate \eqref{energyestimgen}. Again, by Theorem~\ref{divfretransform}, $u$ is a weak solution to \eqref{maineq2}, and the uniqueness follows from part (i).  Since $\rho c \geq \alpha \min_{\overline{U}} \rho > 0$, the contraction estimate follows from Lemma~\ref{amsprocethm}(ii). \\
\text{} \hfill $\square$ 
\centerline{}
The following provides an explicit example of a vector field $\mathbf{H} \in L^d(B_r(x_0), \mathbb{R}^d)$ that satisfies condition {\bf (T)} but does not belong to $\bigcup_{p \in (d, \infty)}L^p(B_r(x_0), \mathbb{R}^d)$.

\begin{exam}
Let $B_1:=\{x \in \mathbb{R}^d: \|x\|<1 \}$, and define $\Phi : B_1 \to \mathbb{R}$ by
\[
\Phi(x) := \ln \ln \left( 1 + \frac{1}{\|x\|} \right), \quad x \in B_1.
\]
Then $\nabla \Phi \in L^d(B_1, \mathbb{R}^d)$, but $\nabla \Phi \notin \bigcup_{p \in (d, \infty)} L^p(B_1, \mathbb{R}^d)$.  
By symmetry, for each $i \in \{1, \ldots, d\}$,
\[
\partial_i \Phi \in L^d(B_1), \quad \text{but} \quad \partial_i \Phi \notin \bigcup_{p \in (d, \infty)} L^p(B_1).
\]
Let $\mathbf{H}_1 \in L^p(B_1, \mathbb{R}^d)$ be an arbitrary vector field, and define $\mathbf{H}_2 : B_1 \to \mathbb{R}^d$ by
\[
\mathbf{H}_2 := (\partial_d \Phi, 0, \ldots, -\partial_1 \Phi) \quad \text{on } B_1.
\]
Then $\mathbf{H}_2 \in L^d(B_1, \mathbb{R}^d)$, but $\mathbf{H}_2 \notin \bigcup_{p \in (d, \infty)} L^p(B_1, \mathbb{R}^d)$.  
In particular, for all $\varphi \in C_0^\infty(B_1)$,
\[
\int_{B_1} \langle \mathbf{H}_2, \nabla \varphi \rangle\,dx = \int_{B_1} \partial_d \Phi \, \partial_1 \varphi - \partial_1 \Phi \, \partial_d \varphi \, dx = \int_{B_1} \Phi (-\partial_d \partial_1 \varphi + \partial_1 \partial_d \varphi) \, dx = 0,
\]
and hence $\operatorname{div}\mathbf{H}_2 = 0 \in L^{\tilde{q}}(B_1)$ for any $\tilde{q} \in (\frac{d}{2}, \infty)$.
Thus, the vector field $\mathbf{H} := \mathbf{H}_1 + \mathbf{H}_2 \in L^d(B_1, \mathbb{R}^d)$ satisfies condition {\bf (T)} but does not belong to $\bigcup_{p \in (d, \infty)} L^p(B_1, \mathbb{R}^d)$.
\end{exam}

\section{Conclusions and discussion} \label{sec5}

This paper establishes the existence and uniqueness of weak solutions to homogeneous boundary value problems for linear elliptic equations with drift coefficients $\mathbf{H} \in L^d(U, \mathbb{R}^d)$, under the assumption that $\mathbf{H}$ satisfies a suitable divergence-type condition. The argument fundamentally relies on the elliptic regularity results (H\"{o}lder regularity and the Harnack inequality) of G.~Stampacchia \cite{S65}, which remain applicable even in the critical case. A key analytical observation is that both the Harnack inequality and H\"{o}lder continuity hold despite the limited regularity of the drift term. \\
In contrast to the framework developed in \cite{L25jm}, the present work does not provide quantitative control over the constants appearing in the a priori estimates. For instance, the constant $\tilde{K}_1 \geq 1$ in Theorem~\ref{existrho} depends on the drift $\mathbf{H}$ itself rather than its norm in a specific function space. It remains unclear whether such constants remain stable under mollification or other approximation procedures for $\mathbf{H}$, and further investigation is needed to address this issue.\\
Another natural question is whether the results extend beyond the critical case $\mathbf{H} \in L^d(U, \mathbb{R}^d)$ to the subcritical setting $\mathbf{H} \in L^2(U, \mathbb{R}^d)$. Although some special cases have been studied, such as divergence-free drifts \cite{K07} and drifts with nonnegative divergence \cite{L24}, the general case with drifts in $L^2$ or $L^d$ remains open. Addressing this problem would likely require a more delicate analysis.\\
Finally, the methods developed in this paper are not confined to the context of linear divergence-form equations. They may also be applicable to regularity theory for double-divergence form equations and to the study of invariant measures for stochastic analysis, as in \cite{L25bv}. \\


\subsection*{\bf Acknowledgments}
This work was supported by the National Research Foundation of Korea (NRF) grant funded by the Korea government (MSIT) (RS-2025-16070171).


\centerline{}
\centerline{}
\noindent
Haesung Lee\\
Department of Mathematics and Big Data Science,  \\
Kumoh National Institute of Technology, \\
Gumi, Gyeongsangbuk-do 39177, Republic of Korea, \\
E-mail: fthslt@kumoh.ac.kr, \; fthslt14@gmail.com
\end{document}